\numberwithin{equation}{subsection}
\numberwithin{equation}{section}
\theoremstyle{plain}
\newtheorem{theorem}{Theorem}[section]
\newtheorem{lemma}[theorem]{Lemma}
\newtheorem{proposition}[theorem]{Proposition}
\newtheorem{conj}[theorem]{Conjecture}
\theoremstyle{definition}
\newtheorem{definition}[theorem]{Definition}
\theoremstyle{remark}
\newtheorem{remark}[theorem]{\bf{Remark}}
\newcommand{\m}{\ensuremath{{\cal M}}}
\newcommand{\cd}{\ensuremath{{\cal D}}}
\newcommand{\ci}{\ensuremath{{\cal I}}}
\newcommand{\cR}{\ensuremath{{\cal R}}}
\newcommand{\ti}{\tilde}
\newcommand{\al}{\alpha}
\newcommand{\ga}{\gamma}
\newcommand{\la}{\lambda}
\newcommand{\si}{\sigma}
\newcommand{\ep}{\varepsilon}
\newcommand{\R}{\ensuremath{{\mathbb R}}}
\newcommand{\N}{\ensuremath{{\mathbb N}}}
\newcommand{\B}{\ensuremath{{\mathbb B}}}
\newcommand{\C}{\ensuremath{{\mathbb C}}}
\newcommand{\cC}{{\mathcal C}}
\newcommand{\downto}{\downarrow}
\newcommand{\tensor}{\otimes}
\DeclareMathOperator{\VolBB}{Vol\mathbb{B}}
\DeclareMathOperator{\inj}{inj}
\newcommand{\beq}{\begin{equation}}
\newcommand{\eeq}{\end{equation}}
\newcommand{\beqa}{\begin{equation}\begin{aligned}}
\newcommand{\eeqa}{\end{aligned}\end{equation}}
\newcommand{\brmk}{\begin{rmk}}
\newcommand{\ermk}{\end{rmk}}
\newcommand{\partref}[1]{\hbox{(\csname @roman\endcsname{\ref{#1}})}}
\newcommand{\half}{\frac{1}{2}}
\newcommand{\Rm}{{\mathrm{Rm}}}
\newcommand{\Ric}{{\mathrm{Ric}}}
\newcommand{\Sc}{{\mathrm{R}}}
\newcommand{\K}{{\ensuremath{\mathrm{K_{IC_1}}}}}
\newcommand{\I}{{\ensuremath{\mathrm{IC_1}}}}
\newcommand{\twopartcond}[4]
{ \setstretch{1.5}
	\left\{
		\begin{array}{ll}
			#1 & \mbox{on } #2 \\ 
			#3 & \mbox{on } #4
		\end{array}
	\right.
}
\title{Pyramid Ricci Flow in Higher Dimensions}
\author{Andrew D. McLeod and Peter M. Topping}
\date{ \today}
\begin{document}

\usetagform{red}

\maketitle

\begin{abstract}
In this paper, we construct a pyramid Ricci flow starting with a complete Riemannian manifold $(M^n,g_0)$ that is PIC1, or more generally satisfies a lower curvature bound $\K\geq -\al_0$. 
That is, instead of constructing a flow on $M\times [0,T]$, we construct it on a subset of space-time that is a union of parabolic cylinders
$\B_{g_0}(x_0,k)\times [0,T_k]$ for each $k\in\N$, where $T_k\downto 0$,
and prove estimates on the curvature and Riemannian distance.
More generally, we construct a pyramid Ricci flow starting with any noncollapsed \I-limit space, and use it to establish that such limit spaces are globally homeomorphic to smooth manifolds via homeomorphisms that are locally bi-H\"older.
\end{abstract}


\section{Introduction}
\label{intro}



A central issue in differential geometry is to understand Riemannian manifolds with lower curvature bounds. One of the  important tasks in this direction is to understand the topological implications of such geometric bounds. Another, 
which is the main focus of this paper, 
is to understand the structure of Gromov-Hausdorff limits of sequences of manifolds satisfying a uniform lower curvature bound. 

There is some choice as to the precise notion of curvature  bound to consider. 
Imposing a uniform lower bound on the sectional curvatures gives limits that are Alexandrov spaces, studied since the middle of the twentieth century, and about which we now have a great deal of information, e.g. \cite{BGP92}. 
In practice, we often know  a uniform lower bound not for each sectional curvature, but for a suitable average of sectional curvatures, and the instance that has received the most attention is the case of limits of manifolds with a uniform lower Ricci bound. Such \emph{Ricci limit spaces} have been studied extensively since the work of Cheeger-Colding, starting in the 1990s, and have been widely applied, for example in the study of Einstein manifolds, \cite{Cheeger}. One result that is particularly relevant to the present paper is the topological regularity of (non-collapsed) three-dimensional Ricci limit spaces in the sense that they are globally homeomorphic to smooth manifolds via homeomorphisms that are locally bi-H\"older \cite{Topping2, MT18, Hochard}.

This paper is concerned principally with a way of averaging    
sectional curvatures that is less familiar than Ricci curvature.
Positivity of this average is generally referred to as PIC1,
with this concept first appearing in 
the seminal work of Micallef and Moore \cite{MM88} that was principally concerned with the weaker notion of \emph{positive isotropic curvature}, itself now abbreviated as PIC.
We will give the definition and basic properties of PIC1, and its nonnegative version WPIC1 (sometimes called NIC1) in Section \ref{PIC1sect}.

The PIC1 condition is natural for multiple reasons. 
To begin with, it can be naturally compared with other  curvature conditions. For example, it is implied by $\frac14$-pinching,
positive curvature operator, 2-positive curvature operator, and positive complex sectional curvature separately \cite[\S 5]{MM88}.
As we recall in Section \ref{PIC1sect},
the PIC1 condition implies that $\Ric>0$, so volume comparison and compactness are at our disposal, and in three dimensions the conditions are equivalent. 
Moreover, for many purposes PIC1 appears to be just the right condition to control the topology of the underlying manifold and
the regularity of limit spaces.
Related to this is that PIC1 interacts very well with the Ricci flow.
First, it is preserved under the flow \cite{N10, Ham97, BS09, Wilking}. Second, PIC1 is strong enough to guarantee that closed manifolds flow under (renormalised) Ricci flow to spherical space forms, as shown by Brendle \cite{B08}, generalising the earlier work of Hamilton \cite{Ham82}, B\"ohm-Wilking \cite{BW08} and Brendle-Schoen \cite{BS09}. In the noncompact case, it is tempting to make the following conjecture.

\begin{conj}
\label{PIC1conj}
If $(M,g_0)$ is a smooth, complete, $n$-dimensional Riemannian manifold, $n\geq 3$, satisfying the WPIC1 condition, then there exists a smooth 
WPIC1 Ricci flow $g(t)$ on $M$ for $t\in [0,T)$, some $T>0$, such that $g(0)=g_0$.
\end{conj}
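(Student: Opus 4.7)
The plan is to construct $g(t)$ as the limit of an exhausting family of local Ricci flows, while using the WPIC1 condition together with its preservation under the flow to furnish a uniform positive existence time.

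First, fix an exhaustion of $M$ by relatively compact open sets $\Omega_1\Subset\Omega_2\Subset\cdots$ with $\bigcup_k\Omega_k=M$. For each $k$, produce a smooth Ricci flow $g_k(t)$ defined on $\Omega_k\times[0,T_k]$ whose initial data is close to $g_0|_{\Omega_k}$, using a boundary-modification/conformal-cap procedure in the spirit of Hochard or of Cabezas-Rivas--Wilking so that the initial datum has bounded curvature, yet retains WPIC1 on an interior region that exhausts $\Omega_k$ as the cap is pushed to infinity. By the preservation results of Nguyen, Brendle--Schoen and Wilking (invoked in Section \ref{PIC1sect}), each flow $g_k(t)$ is itself WPIC1 wherever it is defined, and in particular has $\Ric\geq 0$.

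Second, derive a priori curvature estimates that are \emph{uniform} in $k$ on each fixed precompact subset $U\subset M$. The WPIC1 condition should be exploited here in two ways: (a) to obtain volume comparison and noncollapsing on scales controlled by $g_0|_U$, since $\Ric\geq 0$; and (b) to obtain a pointwise pinching on the full curvature operator in terms of its trace, of the form $|\Rm|\leqs C(n)\Sc + \text{lower order}$, analogous in spirit to Hamilton--Ivey but now coming from the cone structure of WPIC1 curvature operators rather than a three-dimensional accident. Combined with pseudolocality, such a pinching would convert local scalar curvature control into full curvature control and, via a standard continuity argument, give a uniform lower bound $T_k\geqs T>0$ depending only on a double integral or supremum of $|\Rm_{g_0}|$ over a controlled enlargement of $U$.

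Third, pass to the limit. Fix any $U\Subset M$; for large $k$ the flows $g_k(t)$ are defined on $U\times[0,T]$ with uniform $C^\infty$ bounds in $U$ by Shi's estimates applied after the curvature bound of the previous step. A diagonal extraction along the exhaustion then gives a smooth Ricci flow $g(t)$ on $M\times[0,T)$ with $g(0)=g_0$. WPIC1 of the limit is inherited from each $g_k(t)$ by smooth local convergence, since WPIC1 is a closed condition on the curvature operator at each point.

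The main obstacle is the second step: establishing a scale-invariant local existence estimate that prevents $T_k$ from collapsing to zero as $k\to\infty$. In dimension three, Hamilton--Ivey pinching does precisely this, which is how \cite{Topping2, MT18, Hochard} obtain a genuine Ricci flow (rather than a pyramid) from Ricci lower bounds. In higher dimensions, no such pinching estimate is presently known to follow from WPIC1, and it is exactly this gap that forces the authors to settle for the pyramid construction on $\B_{g_0}(x_0,k)\times[0,T_k]$ with $T_k\downto 0$. Any successful proof of Conjecture \ref{PIC1conj} would likely hinge on a new pointwise or integral pinching estimate for WPIC1 curvature operators evolving under Ricci flow, or on an alternative mechanism (e.g.\ a Perelman-type monotone quantity or a Wilking-style algebraic argument on the space of curvature operators) that controls all sectional curvatures in terms of a scale-invariant quantity preserved by the flow.
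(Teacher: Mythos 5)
Statement \ref{PIC1conj} is a \emph{conjecture}: the paper offers no proof of it, and explicitly leaves it open (the authors instead prove the strictly weaker pyramid results, Theorems \ref{mollification}--\ref{PIC1_pyramid_RF}). Your proposal is therefore not comparable to any argument in the paper, and, as you yourself concede in your final paragraph, it is not a proof: the entire construction hinges on Step 2, a uniform-in-$k$ lower bound on the local existence times $T_k$, and the pinching estimate $|\Rm|\leqs C(n)\Sc+\text{l.o.t.}$ that you invoke to get it is precisely what is unknown for WPIC1 in dimensions $n\geq 4$. Naming the missing estimate is not the same as supplying it. There are also problems earlier in the outline: the conformal-cap construction of Cabezas-Rivas--Wilking requires nonnegative \emph{complex sectional} curvature (WPIC2), a far stronger hypothesis than WPIC1, and it is not known that any capping or boundary-modification procedure preserves WPIC1 on an exhausting interior region; the local flows one can actually produce under a lower $\K$ bound come from Lai's Theorem 1.1 in \cite{YL18}, whose existence time degenerates with the ball radius --- which is exactly why the paper ends up with $T_k\downto 0$.

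One factual correction: your claim that Hamilton--Ivey pinching is ``how \cite{Topping2, MT18, Hochard} obtain a genuine Ricci flow from Ricci lower bounds'' in three dimensions overstates what those papers prove. They all require, in addition, a noncollapsing hypothesis ($\VolBB_{g_0}(x_0,1)\geq v_0$), and the paper itself points out that even the three-dimensional version of Conjecture \ref{PIC1conj} (nonnegative Ricci curvature, no volume assumption) remains open. Indeed Section \ref{intro} sketches warped-product examples with $\K\geq-\ep$ that collapse at infinity and should not admit a complete flow for any uniform time, which is the structural obstruction your Step 2 would have to rule out using the sign condition $\al=0$ alone. In short: your outline correctly identifies where the difficulty lies, but it does not close the gap, and the statement remains a conjecture.
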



\noindent
The three-dimensional version of this conjecture, where the assumption is of nonnegative Ricci curvature,  has been considered for a long time.
However, the conjecture should be false if we were to assume only nonnegative Ricci curvature in higher dimensions rather than WPIC1.
Under the much stronger condition of nonnegative complex sectional curvature, Cabezas-Rivas and Wilking managed to start the Ricci flow in \cite{CRW15}. In three-dimensions this would correspond to nonnegative sectional curvature, which is much more restrictive that nonnegative Ricci curvature despite the close links between Ricci and sectional curvatures in this dimension.
Under the additional asymptotic condition of maximal volume growth, the flow was started in \cite{HL18}.


For the remainder of the paper we will mainly consider a weaker condition than PIC1 in the sense that we ask that all the complex sectional curvatures corresponding to degenerate 2-planes (see Section \ref{PIC1sect}) are not necessarily positive, as for PIC1, but 
are bounded below by $-\al<0$, say. 
If we write $\K$ for the function acting on the space of all degenerate 2-planes in fibres of $T^\C M$, and returning the corresponding complex sectional survature (see Section \ref{PIC1sect}) then this condition is written $\K\geq -\al$.

In contrast to Conjecture \ref{PIC1conj}, it seems that a complete manifold 
can satisfy $\K\geq -\ep<0$ but not admit a Ricci flow even for a short time. Indeed it is easy to generalise the example in \cite{ICM2014} to higher dimensions.
Alternatively we can take a product $\R\times S^{n-1}$ with the 
warped product metric $dr^2+f(r)g_{S^{n-1}}$, where $f(r)$ is a suitable slowly-decreasing function with $f(r)\to 0$ as $r\to\infty$.
Because the manifold is collapsing at infinity, and looks like a product metric at the curvature scale, the Ricci flow would intuitively like to pinch the $S^{n-1}$ component at a given value of $r$ in a time of order $f(r)\to 0$.

One can avoid these difficulties by preventing the manifold from being singular at infinity, for example by imposing bounded curvature or a global noncollapsing condition (i.e. that the volume of any unit ball has a uniform positive lower bound). 
Alternatively, one can flow only locally.
See \cite{shi, Hochard, Topping2, YL18, BCRW19, HL18}.
In this paper, we take the approach of \emph{pyramid Ricci flows}, as introduced in \cite{MT18},
to flow on a pyramid shaped subspace of space-time with controlled geometry:

\begin{theorem}[\bf{\emph{Global pyramid Ricci flows}}]
\label{mollification}
Let $\al_0 , v_0 > 0,$ $n \in \N$ with $n \geq 3$.
Suppose that $(M,g_0)$ is an $n$-dimensional complete Riemannian manifold
with $\K[g_0]\geq -\al_0$ throughout, and $\VolBB_{g_0}(x_0,1)\geq v_0$ for some $x_0\in M$.
Then there exist increasing sequences $C_j \geq 1$ and $\al_j > 0$ and a decreasing sequence $T_j > 0,$ all defined for $j \in\N$, and depending only on $n$, $\al_0$ and $v_0$, for which the following is true.

There exists a smooth Ricci flow $g(t),$ defined on a subset of spacetime that contains, for each $j \in \N,$ the cylinder $\B_{g_0} ( x_0 , j) \times \left[0, T_j \right],$ 
satisfying that $g(0)=g_0$ throughout $M$, and further that, again for each $j \in \N,$
\beq\label{moll concs} \twopartcond
		{\K[g(t)] \geq -\al_j } 
		{\B_{g_0} ( x_0 , j) \times \left[ 0 , T_j \right]}
		{ \left| \Rm \right|_{g(t)} \leq \frac{C_j}{t}} 
		{\B_{g_0} ( x_0 , j ) \times \left( 0 , T_j \right].}
\eeq
\end{theorem}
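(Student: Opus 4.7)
The plan is to construct, for each $j\in\N$, a local Ricci flow on $\B_{g_0}(x_0,j)\times[0,T_j]$ obeying the estimates in \eqref{moll concs}, and then assemble these compatible flows into a single Ricci flow defined on the pyramid $\bigcup_j\B_{g_0}(x_0,j)\times[0,T_j]$. This mirrors the three-dimensional strategy of \cite{MT18}, with $\K\geqs -\al_0$ replacing the Ricci lower bound; the core new ingredient needed is a local short-time existence theorem adapted to the PIC1 setting, in the spirit of \cite{BCRW19,YL18}.

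First, I would promote the local noncollapsing to every scale. Since $\K[g_0]\geqs -\al_0$ implies a lower Ricci bound $\Ric[g_0]\geqs -c(n)\al_0$ (see Section \ref{PIC1sect}), Bishop--Gromov comparison converts the single hypothesis $\VolBB_{g_0}(x_0,1)\geqs v_0$ into a lower bound $\VolBB_{g_0}(x_0,R)\geqs v(R)$ on every concentric ball, with $v(R)$ depending only on $n,\al_0,v_0$. Next, I would appeal to (or prove) the following local existence statement: for each $R\geqs 1$, under $\K[g_0]\geqs -\al_0$ and $\VolBB_{g_0}(x_0,R+1)\geqs v(R+1)$, there is a Ricci flow on $\B_{g_0}(x_0,R)\times[0,T(R)]$ with $|\Rm|_{g(t)}\leqs C(R)/t$ and $\K[g(t)]\geqs -\tilde\al(R)$, where the constants depend only on $n,\al_0,v_0,R$. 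Applying this with $R=j$ produces a candidate flow $g^{(j)}(t)$ on each cylinder, and the advertised monotonicity of $T_j,C_j,\al_j$ is read off from the explicit $R$-dependence of the constants. Finally, a local uniqueness result for Ricci flows with $|\Rm|\leqs C/t$ and common initial data (in the Chen--Zhu or Kotschwar vein, suitably localised) shows that $g^{(j+1)}$ and $g^{(j)}$ agree on the overlap $\B_{g_0}(x_0,j)\times[0,T_{j+1}]$, so the family $\{g^{(j)}\}$ glues to a single smooth Ricci flow on the pyramid.

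The main obstacle will be the PIC1 local existence theorem itself. In three dimensions \cite{MT18} exploited Hochard's surgery-and-conformal-modification construction built on a lower Ricci bound; here one must instead combine the PIC1 preservation results of \cite{BS09,Wilking} with a localised pseudolocality or doubling-time argument showing that both noncollapsing and the $\K$ lower bound persist under the flow with quantitatively controlled loss. A secondary but also delicate point is controlling distance distortion: the estimate $|\Rm|_{g(t)}\leqs C/t$ only just integrates in time to give modest $g(t)$-vs-$g_0$ distortion, and this must be sharp enough to transfer the evolving ball on which the local flow naturally lives to the fixed initial-time ball $\B_{g_0}(x_0,j)$ appearing in the statement. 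Once these two ingredients are in hand, the remaining gluing and the bookkeeping of the sequences $T_j,C_j,\al_j$ follow the three-dimensional template almost verbatim.
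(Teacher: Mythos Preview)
Your overall architecture is right in spirit, but the gluing step via local uniqueness is a genuine gap. Local uniqueness for Ricci flow with only $|\Rm|\leq C/t$ bounds on an incomplete region is not available; the Chen--Zhu and Kotschwar arguments require completeness and curvature bounds integrable in time, and no suitable localisation is known here. The paper flags exactly this issue: after stating the local pyramid flows of Theorem~\ref{Ricci Flow}, it notes that ``the flows are emphatically not unique'' and that one therefore cannot simply take a union of flows defined on nested regions.

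The paper's route differs from yours in two respects. First, rather than building one flow per cylinder $\B_{g_0}(x_0,j)\times[0,T_j]$ and gluing, it builds for each $l\in\N$ a \emph{single} connected flow $g_l$ on the entire finite pyramid $\cd_l=\bigcup_{k=1}^l \B_{g_0}(x_0,k)\times[0,T_k]$. This is the content of the Pyramid Extension Lemma~\ref{PEL}: start with Lai's local existence (Theorem~\ref{loc_exist_YL}) on the largest ball $\B_{g_0}(x_0,l)$ for the shortest time $T_l$, then iteratively \emph{restart} the flow from its final-time metric to extend it to time $T_{l-1}$ on the smaller ball $\B_{g_0}(x_0,l-1)$, and so on down to $k=1$. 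The key bookkeeping is that the constants $C_k,\al_k,T_k$ produced at step $k$ depend only on $n,\al_0,v_0,k$ and not on $l$, so the family $\{g_l\}$ satisfies uniform estimates on each fixed cylinder. Second, to pass to a single global flow one invokes the compactness argument of \cite{MT18} rather than uniqueness: the uniform $C_k/t$ curvature and injectivity radius bounds give Cheeger--Gromov--Hamilton compactness on each cylinder, and a diagonal subsequence converges to the desired pyramid flow $g(t)$. Your ingredients (Lai's local existence, Bishop--Gromov, shrinking/expanding balls for distance distortion, propagation of the $\K$ lower bound) are all present in the paper, but are deployed inside this extension-then-compactness scheme rather than a glue-by-uniqueness one.
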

\noindent
Thus, as in \cite{MT18}, the domain of definition of the Ricci flow starts with the whole manifold, but shrinks to avoid the singularities that we  envisage in the example above.

It was shown in \cite{MT18}, by proving an appropriate compactness result, that the existence of global pyramid Ricci flows such as those in  Theorem \ref{mollification} follows from the construction of
local pyramid Ricci flows as in the theorem below.
(Note as in Section \ref{PIC1sect} that $\I$ lower bounds imply lower Ricci bounds, so this implication follows as in 
\cite[Theorem 1.3]{MT18}.)



\begin{theorem}[\bf{\emph{Local pyramid Ricci flows}}]
\label{Ricci Flow} 
Let $\al_0 , v_0 > 0,$ $n \in \N$ with $n \geq 3$. 
Suppose $(M,g_0)$ is an $n$-dimensional complete Riemannian manifold
with $\K[g_0]\geq -\al_0$ throughout, and $\VolBB_{g_0}(x_0,1)\geq v_0$ for some $x_0\in M$.
%
Then there exist increasing sequences $C_k \geq 1$ and  $\al_k>0$, and a decreasing sequence $T_k> 0$, all defined for $k \in \N$, and  depending only on $n$, $\al_0$ and $v_0,$ such that the following is true. 
For any $ l \in \N$ there exists a smooth Ricci flow solution $g_l(t),$ defined on a subset $\cd_l$ of spacetime given by 
$$\cd_l := \bigcup_{k=1}^{l} \B_{g_0} (x_0 , k) \times \left[ 0 , T_k \right],$$
with $g_l(0) = g_0$ on $\mathbb{B}_{g_0} (x_0 , l)$, and satisfying,
for each $k\in \{1,\ldots, l\}$,
\beq\label{conc_1} \twopartcond
		{\K[g_l(t)] \geq -\al_k } 
		{\B_{g_0} ( x_0 , k) \times \left[ 0 , T_k \right]}
		{\left| \Rm \right|_{g_l(t)} \leq \frac{C_k}{t}} 
		{\B_{g_0} ( x_0 , k ) \times \left(0,T_k\right].}
\eeq
\end{theorem}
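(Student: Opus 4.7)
My plan is to prove the theorem by induction on the outer radius $l \in \N$, constructing $g_l$ on $\cd_l$ by starting with a short-time Ricci flow on the largest ball $\B_{g_0}(x_0, l)$ and iteratively extending it in time on successively smaller concentric balls. The workhorse, which I assume has been established earlier in the paper, is a single-scale local existence statement of the following form: given a complete $(N^n, h)$ with $\K[h] \geq -\al$ and $\VolBB_h(y, 1) \geq v$, there exist $\tau = \tau(n, \al, v) > 0$ and $A = A(n, \al, v) \geq 1$ and a smooth Ricci flow on $\B_h(y, 1) \times [0, \tau]$ satisfying $|\Rm| \leq A/t$ and $\K \geq -2\al$. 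Availability of such a tool in the $\K \geq -\al$ regime builds on the preservation of this curvature condition along Ricci flow due to Wilking \cite{Wilking}, Brendle-Schoen \cite{BS09} and Nguyen \cite{N10}, combined with a Hochard-style gluing \cite{Hochard} from the complete setting.

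Before iterating, I would propagate the noncollapsing at $x_0$ to all nearby points using Bishop-Gromov volume comparison with the lower Ricci bound $\Ric[g_0] \geq -(n-1)\al_0$ implied by $\K[g_0] \geq -\al_0$: for any $k \in \N$ and any $y \in \B_{g_0}(x_0, k+1)$, one obtains $\VolBB_{g_0}(y, 1) \geq v(n, \al_0, v_0, k) > 0$. This ensures that the single-scale tool can be applied at every point inside any given ball $\B_{g_0}(x_0, k)$, and gives the quantitative dependence of all constants on $k$ that we need.

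The inductive construction then proceeds as follows. For the outermost layer, rescale so that $\B_{g_0}(x_0, l)$ has unit radius and apply the single-scale tool at $x_0$ to produce a Ricci flow on $\B_{g_0}(x_0, l) \times [0, T_l]$ with $|\Rm| \leq C_l/t$ and $\K \geq -\al_l$, and constants depending only on $n, \al_0, v_0, l$. I would then use distance distortion for Ricci flow, valid because $|\Rm| \leq C_l/t$ together with the lower Ricci bound from $\K \geq -\al_l$, to identify $g(T_l)$-balls with $g_0$-balls up to a controlled loss of radius. On the next smaller ball $\B_{g_0}(x_0, l-1)$, the improved geometry at time $T_l$ allows reapplication of the single-scale tool to $(M, g(T_l))$ (after completing it via a cutoff/gluing outside a concentric larger ball) to extend the flow to a later time $T_{l-1} > T_l$ on the smaller ball. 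Iterating this argument down to $k = 1$, and patching the resulting extensions via a uniqueness-style argument for Ricci flow with bounded curvature on each time slab, then produces the desired $g_l$ on $\cd_l$.

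The main difficulty will be twofold. First, the patching step requires a form of uniqueness for Ricci flow on incomplete geodesic balls with the decay $|\Rm| \leq C/t$, which in full generality fails; one must either invoke a uniqueness theorem that exploits this decay together with matching at time slices, or arrange that each restart arises from a complete auxiliary manifold via cutoff and pass to a limit of flows defined on nested complete ambient spaces. Second, and closely tied, is the bookkeeping: the constants $T_k, C_k, \al_k$ produced by iterated applications of the single-scale tool must be verified to depend only on $n, \al_0, v_0$ and $k$, and crucially not on $l$, which forces a careful inductive choice of the restart times $T_k$ in terms only of the already-controlled data at the preceding scale, together with uniform bounds throughout the cutoff-and-limit step.
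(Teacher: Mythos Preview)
Your outer-to-inner restart strategy matches the paper's, but the way you propose to ``patch'' the extensions is where the plan diverges and where the gap lies. You do not need any uniqueness result. The paper packages each step as a \emph{Pyramid Extension Lemma}: given a flow $\tilde g(t)$ on $\B_{g_0}(x_0,k+1)\times[0,S]$ with $|\Rm|\leq C_{k+1}/t$ and an injectivity-radius lower bound, one first applies a local propagation result (Hochard's version of the Bamler--Cabezas-Rivas--Wilking estimates) to obtain $\K[\tilde g(t)]\geq -\al_k$ on $\B_{g_0}(x_0,k+\tfrac12)$, then uses this Ricci lower bound plus volume control to restart the flow at the final time $\tau=\min\{S,T_k\}$ via Lai's local existence theorem, and simply \emph{concatenates}. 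The extended flow on $\B_{g_0}(x_0,k)$ then agrees with $\tilde g$ on $[0,\tau]$ by construction; no appeal to uniqueness on incomplete domains is ever made.

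What you have not addressed, and what is the genuinely nontrivial point, is that after concatenation the curvature bound reads $|\Rm|\leq C_{k+1}/t$ for $t\leq\tau$ and only $|\Rm|\leq c_k/(t-\tau)$ for $t>\tau$, which blows up at $t=\tau$ rather than decaying like $C_k/t$ on all of $(0,T_k]$. The paper recovers the global $C_k/t$ decay by a separate application of a local curvature lemma (a version of Lai's Lemma~3.4, itself an extension of the Simon--Topping local lemma) to the concatenated flow; this lemma needs only a $\K$ lower bound on a spacetime tube and the Bishop--Gromov noncollapsing at $t=0$, both of which have already been arranged. Your proposal would go through once you replace the uniqueness step by concatenation and insert this a posteriori recovery of the $C_k/t$ bound; the bookkeeping concern you raise about $l$-independence is then automatic, since each step of the Pyramid Extension Lemma depends only on $C_{k+1}$, $\al_0$, $v_0$, $n$ and $k$.
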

\noindent
As we increase $l$, the local pyramid Ricci flows $g_l(t)$ are defined on a larger and larger domain in space-time. If each flow extended the previous one, then we could take a union of them to obtain the flow required for Theorem \ref{mollification}. However, the flows are emphatically not unique, and instead we need to take a limit of a subsequence of the flows.

One novelty of pyramid Ricci flows, 
which is essential in order to be able to appeal to compactness and take a limit of a subsequence of the flows as $l\to\infty$, is that the shape of 
the domain $\cd_l$ intersected with $\B_{g_0} (x_0 , r)\times [0,\infty)$ is independent of $l\geq r$. We pay for this by ending up with curvature bounds \eqref{conc_1} that deteriorate as $k$ increases. In contrast, \emph{partial} Ricci flows, with instead uniform $C/t$ curvature bounds, defined on subsets of space-time,
were considered by Hochard \cite{Hochard}.

In fact, the shape of the domains $\cd_l$ depends on the initial data $g_0$ only in terms of $n$, $\al_0$ and $v_0$.
Just as in \cite{MT18} (cf. the proof of Theorem 5.1 there) 
this means that we can apply Theorem \ref{Ricci Flow} to pointed manifolds $({\cal M}_l,g_l,x_l)$
approximating a limit space, and appeal to compactness to obtain a Ricci flow starting at the given limit space.
Whereas in \cite{MT18} we worked with Ricci limit spaces, here we work with \emph{\I- limit spaces}. 

\begin{definition}
We call a complete metric space $(X,d)$ a noncollapsed 
\emph{\I-limit space} corresponding to  $\al_0>0$, $v_0>0$ and $n\in\N$ with $n\geq 3$,
if it arises within a pointed Gromov-Hausdorff limit 
$$(\m_i,g_i,x_i)\to (X,d,x_\infty)$$
of a sequence of pointed $n$-dimensional Riemannian manifolds such that
$\VolBB_{g_i}(x_i,1)\geq v_0>0$ and $\K[g_i]\geq -\al_0$.
\end{definition}

\begin{theorem}[{\bf{Pyramid Ricci flow from a \I-limit space}}]
\label{PIC1_pyramid_RF}
Suppose that $(X,d)$ is a \I-limit space 
corresponding to  $\al_0>0$, $v_0>0$ and $n\in \N$ with $n\geq 3$.
Then there exist increasing sequences $C_k \geq 1$ and $\al_k > 0$ and a decreasing sequence $T_k > 0,$ all defined for $k \in\N$, and depending only on $n$, $\al_0$ and $v_0,$ for which the following holds.

There exist a smooth $n$-manifold $M,$ a point $x_0 \in M,$ 
a complete distance metric $d : M \times M \rightarrow [0,\infty)$ generating the same topology as we already have on $M,$  
and a smooth Ricci flow $g(t)$ 
defined on a subset of spacetime $M \times (0,\infty)$ that contains $\B_{d} ( x_0 , k) \times \left(0, T_k \right]$ for each $k \in \N,$ 
with $d_{g(t)} \rightarrow d$ locally uniformly on $M$ as $t \downarrow 0,$ 
such that $(M,d)$ is isometric to $(X,d)$.
Moreover, for any $k \in \N,$ 
\begin{equation}
\label{r_d_concs} 
\twopartcond 
		{\K[g(t)] \geq -\al_k } 
		{\B_{d} ( x_0 , k ) \times \left(0,T_{k}\right]}
		{| \Rm |_{g(t)} \leq \frac{C_{k}}{t}} 
		{\B_{d} ( x_0 , k ) \times \left(0,T_{k}\right].}
\end{equation}
Finally, if $g$ is any smooth complete Riemannian metric on $M$ then the identity map 
$(M,d)\to (M,d_g)$ is locally bi-H\"older.
\end{theorem}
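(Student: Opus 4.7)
The plan is to use the Ricci flow $g(t)$ supplied by the earlier part of Theorem \ref{PIC1_pyramid_RF} as a bridge between $d$ and $d_g$. As a first reduction, any two smooth complete Riemannian metrics on $M$ induce distance functions that agree with the manifold topology and are locally bi-Lipschitz on compact sets, hence locally bi-H\"older. Applying this to the pair $d_g$ and $d_{g(t_0)}$ for a conveniently chosen $t_0>0$, it suffices to show that the identity $(M,d)\to (M,d_{g(t_0)})$ is locally bi-H\"older; bi-H\"older equivalence of $d$ with $d_g$ then follows by composition.

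Given a compact $K\subset M$, pick $k\in\N$ large enough that $K$ lies in a relatively compact subset of $\B_d(x_0,k)$, and set $t_0:=T_k$. By Theorem \ref{PIC1_pyramid_RF}, on $\B_d(x_0,k)\times(0,T_k]$ we have the estimates $|\Rm|_{g(t)}\leq C_k/t$ and $\K[g(t)]\geq -\al_k$; the latter yields the lower Ricci bound $\Ric_{g(t)}\geq -(n-1)\al_k$ by the basic properties of $\I$ reviewed in Section \ref{PIC1sect}. In addition, $d_{g(t)}\to d$ uniformly on $K$ as $t\downto 0$.

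With these estimates in hand, the bi-H\"older comparison of $d$ and $d_{g(t_0)}$ on $K$ is exactly the Ricci-flow distance-distortion argument underlying the topological regularity results of \cite{Topping2,MT18,Hochard}. The easy direction, $d_{g(t_0)}(x,y)\geq d(x,y)-C\sqrt{t_0}$ for $x,y\in K$, follows from Simon's shrinking-balls lemma applied between a small time $s>0$ and $t_0$ using $|\Rm|_{g(t)}\leq C_k/t$, and then letting $s\downto 0$. The harder direction, $d_{g(t_0)}(x,y)\leq L\,d(x,y)^\beta$ with $\beta=\beta(n,\al_k,C_k)\in(0,1)$, is obtained by a rescaling argument: for points at separation $r=d(x,y)$, rescale the flow so that $x$ and $y$ sit at unit $d$-distance, apply Hamilton's distance distortion formula to the rescaled flow using the lower Ricci bound on the appropriate ball, and translate the resulting estimate back. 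The nonlinear dependence of the rescaled flowing time on $r$ is precisely what forces the H\"older exponent $\beta<1$ rather than a Lipschitz bound. I expect this upper H\"older estimate to be the main technical step, but since it recycles the same scaling argument already used in \cite{MT18,Hochard} and no new ingredient is required beyond the bounds produced in the earlier parts of Theorem \ref{PIC1_pyramid_RF}, it will cause no serious difficulty.
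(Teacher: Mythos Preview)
Your reduction to comparing $d$ with $d_{g(t_0)}$ for a fixed $t_0$ is correct, and this is also the route the paper takes (by deferral to \cite{MT18}). However, the two inequalities you write down do not together give bi-H\"older, because you have the roles of the two directions reversed.

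The inequality you call the ``harder direction'', namely $d_{g(t_0)}(x,y)\leq L\,d(x,y)^\beta$, is actually the easy one, and it holds with $\beta=1$ (Lipschitz): the lower Ricci bound $\Ric_{g(t)}\geq -(n-1)\al_k$ gives $\partial_t g\leq 2(n-1)\al_k\, g$, so $d_{g(t_0)}\leq e^{(n-1)\al_k(t_0-s)}d_{g(s)}$ for $0<s<t_0$, and letting $s\downto 0$ yields $d_{g(t_0)}\leq e^{(n-1)\al_k t_0}\,d$. Your rescaling is unnecessary here, and in fact if you run it carefully you will see that the exponential factor after rescaling is $e^{(n-1)\al_k r^2\cdot t_0/r^2}=e^{(n-1)\al_k t_0}$, which is independent of $r$; no H\"older exponent $<1$ is forced.

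The genuine gap is the \emph{other} direction, $d(x,y)\leq C\,d_{g(t_0)}(x,y)^\beta$. Your ``easy direction'' $d\leq d_{g(t_0)}+C\sqrt{t_0}$ from the shrinking balls lemma is correct as stated but says nothing for small $d_{g(t_0)}$ once $t_0$ is fixed: the additive constant $C\sqrt{t_0}$ does not vanish, so no H\"older bound follows. The argument in \cite{Topping2,Hochard,MT18} instead chooses an \emph{intermediate} time $t\in(0,t_0]$ depending on $\rho:=d_{g(t_0)}(x,y)$: shrinking balls gives $d\leq d_{g(t)}+C\sqrt{t}$, while the two-sided bound $|\Rm|_{g(s)}\leq C_k/s$ on $[t,t_0]$ gives $d_{g(t)}\leq (t_0/t)^{(n-1)C_k}\rho$; optimising $t$ in terms of $\rho$ then produces $d\leq C\rho^\beta$ with $\beta=\beta(n,C_k)\in(0,1)$. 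This is where the H\"older exponent genuinely enters, and it is this step that your proposal is missing.
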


\noindent
Thus Ricci flow gives enough global regularisation, as in \cite{MT18}, to establish that \I-limit spaces are manifolds:

\begin{theorem}[{\bf \em \I-limit spaces are globally smooth manifolds}]
\label{initial_main_thm} 
Let $\al_0 , v_0 > 0$, and $n \in \N$ with $n \geq 3$.
Suppose that $\left( \m_i , g_i,x_i \right)$, for $i \in \N$, is a sequence of $n$-dimensional pointed Riemannian manifolds with
$\VolBB_{g_i}(x_i,1)\geq v_0>0$ and $\K[g_i]\geq -\al_0$.
Then there exist a smooth $n$-manifold $M$, a point $x_0\in M$,
and a complete distance metric $d: M \times M \to [0,\infty)$ generating the same topology as $M$ such that after passing to a subsequence in $i$ we have 
$$\left( \m_i , d_{g_i} ,x_i \right)\to\left( M , d , x_0 \right),$$
in the pointed Gromov-Hausdorff sense, and if $g$ is any smooth complete Riemannian metric on $M$ then the identity map 
$(M,d)\to (M,d_g)$ is locally bi-H\"older.
\end{theorem}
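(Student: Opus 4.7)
The plan is that Theorem \ref{initial_main_thm} is an essentially immediate consequence of Theorem \ref{PIC1_pyramid_RF} combined with Gromov's pointed precompactness theorem. The two non-trivial inputs have already been established, so the argument is really one of assembly.

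First I would apply Gromov's precompactness in the pointed Gromov-Hausdorff category to the sequence $(\m_i,g_i,x_i)$. As recalled in Section \ref{PIC1sect}, the assumption $\K[g_i]\geq -\al_0$ implies a uniform lower Ricci bound $\Ric_{g_i}\geq -(n-1)\al_0$ (up to a dimensional constant). Together with the noncollapsing assumption $\VolBB_{g_i}(x_i,1)\geq v_0$, the Bishop-Gromov inequality gives a uniform doubling constant on balls of any fixed radius around $x_i$, which is exactly what is required in order to extract a subsequence, still denoted $i$, such that $(\m_i,d_{g_i},x_i)\to (X,d_X,x_\infty)$ in the pointed Gromov-Hausdorff sense for some complete pointed metric space $(X,d_X,x_\infty)$.

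By the very definition given just before Theorem \ref{PIC1_pyramid_RF}, the resulting limit $(X,d_X)$ is a noncollapsed \I-limit space corresponding to the parameters $\al_0$, $v_0$ and $n$. I would then apply Theorem \ref{PIC1_pyramid_RF} directly to $(X,d_X)$ to obtain a smooth $n$-manifold $M$, a point $x_0\in M$, and a complete distance metric $d:M\times M\to [0,\infty)$ generating the manifold topology, such that $(M,d)$ is isometric to $(X,d_X)$. Composing the isometry $X\to M$ with the Gromov-Hausdorff convergence $(\m_i,d_{g_i},x_i)\to (X,d_X,x_\infty)$, and sending $x_\infty$ to $x_0$, we obtain the required convergence $(\m_i,d_{g_i},x_i)\to (M,d,x_0)$.

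The final bi-H\"older conclusion, that the identity $(M,d)\to(M,d_g)$ is locally bi-H\"older for any smooth complete Riemannian metric $g$ on $M$, is already part of the statement of Theorem \ref{PIC1_pyramid_RF} and so transfers verbatim. There is no serious obstacle to carrying this out; the only point requiring minor care is to confirm that the lower $\K$ bound descends to a lower Ricci bound in a way sufficient for Gromov precompactness, and this is exactly the content of the elementary pointwise comparison reviewed in Section \ref{PIC1sect}. All of the genuine analytic difficulty has already been absorbed into Theorems \ref{mollification}, \ref{Ricci Flow} and \ref{PIC1_pyramid_RF}.
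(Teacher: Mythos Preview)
Your proposal is correct and matches the paper's approach: the paper does not give a separate proof of Theorem \ref{initial_main_thm} but simply presents it as a direct consequence of Theorem \ref{PIC1_pyramid_RF} (writing ``Thus Ricci flow gives enough global regularisation, as in \cite{MT18}, to establish that \I-limit spaces are manifolds''), with the Gromov precompactness step being implicit. Your assembly via lower Ricci bounds and precompactness is exactly the intended reading.
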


\noindent
The $n=3$ case of this result was proved in \cite{MT18}, extending the work in \cite{Topping2, Hochard} that obtained a local bi-H\"older 
description of noncollapsed Ricci limit spaces as smooth manifolds. 
The proof of the local description extends verbatim to higher dimensions
once the lower Ricci curvature bounds of \cite{Topping1} have been suitably generalised, and this was done by Y. Lai based on extensions of the 
curvature estimates of Bamler, Cabezas-Rivas and Wilking \cite{YL18, BCRW19}.

\begin{remark}
\label{other_cones}
For the remainder of the paper, we use a   slightly different way of writing the curvature condition $\K[g]\geq -\al$ that is more consistent with the literature on which we draw. We will write $\cR_g$ for the curvature operator, and $\cC_{\I}$ for the closed cone of algebraic curvature operators satisfying the WPIC1 condition; see e.g.
\cite{Wilking} for details.
Then the condition $\K[g]\geq -\al$ can be written 
$\cR_g +\al \ci \in \cC_{\I}$.

Similarly, we can define the cones $\cC_{CO}$ and $\cC_{CSC}$ corresponding to the curvature conditions positive curvature operator and positive complex sectional curvature.
It is not hard to check that $\cC_{CO}\subset\cC_{\I}$
and $\cC_{CSC}\subset \cC_{\I}$.
Consequently, Theorems \ref{mollification} and \ref{Ricci Flow} can both be applied when the assumed curvature condition 
$\cR_{g_0} +\al_0 \ci \in \cC_{\I}$ is strengthened to 
$\cR_{g_0} + \al_0 \ci \in \cC$ for any cone
$\cC \in \left\{ \cC_{CO} , \cC_{CSC} \right\}.$
At first glance, the resulting flows only have a $\I$-lower bound,
as in \eqref{moll concs} and \eqref{conc_1},
however
we can use Lemmas \ref{loc_lemma_analogue} and \ref{DB} to improve 
these to $CO$ or $CSC$ lower bounds respectively,
after adjusting the sequences $C_j$, $\al_j$ and $T_j$ 
(cf. the proofs of Theorems 1.3 and 5.1 in \cite{MT18}, for example).


An examination of Hochard's Proposition II.2.6 in \cite{Hoc19} reveals it is true for
the cone $\cC_{2CO}$ of two-positive curvature operators.
Thus Lemma \ref{DB} is valid for this cone, and since
$\cC_{2CO} \subset \cC_{\I}$, the above strategy would also allow us to apply 
Theorems \ref{mollification} and \ref{Ricci Flow} under the assumed curvature condition
$\cR_{g_0} + \al_0 \ci \in \cC_{2CO}$
with correspondingly stronger conclusions.
\end{remark}

\vskip5pt

\noindent
The remainder of the paper is devoted to the proof of Theorem \ref{Ricci Flow} from which the other results follow as discussed  above.
A key ingredient in the proof is Hochard's local version of the estimates of Bamler, Cabezas-Rivas and Wilking \cite{Hoc19, BCRW19} that generalises the Ricci lower bounds of the \emph{double bootstrap} lemma from \cite{Topping1}, see Proposition \ref{Hoc_II.2.6} and Lemma \ref{DB} below.
The proof will be completed in Section \ref{constants} by iterating 
a new \emph{Pyramid extension lemma \ref{PEL}}.

\vskip10pt

\noindent
\emph{Acknowledgements:} 
This work was supported by EPSRC grant number EP/K00865X/1 
and an EPSRC Doctoral Prize fellowship number EP/R513143/1.
The first author would like to thank Felix Schulze for helpful discussions on this topic.
The second author would like to thank Mario Micallef and Andrea Mondino for useful conversations about PIC.

\section{A brief review of PIC1}
\label{PIC1sect}

As mentioned in Section \ref{intro}, PIC1 and its related curvature conditions correspond to the positivity of certain averages of sectional curvatures of  a given \emph{Riemannian} manifold $(M,g)$, just as for Ricci curvature.
To express which averages to take in the most natural way, we 
complexify the tangent bundle, i.e. consider $T^\C M:=TM\tensor_\R \C$, which essentially consists of elements $X+iY$ for vectors $X,Y\in T_pM$.
Just as the usual sectional curvature assigns a real number 
$\Rm(X,Y,X,Y)$ to each two-dimensional linear plane $\si\subset T_pM$ 
spanned by an orthonormal pair $X,Y$, 
the \emph{complex sectional curvature} corresponding to a two-dimensional  complex linear subspace of $T_p^\C M$ spanned by $v,w$ with 
$\langle v,v\rangle = \langle w,w\rangle = 1$ and $\langle v,w\rangle = 0$ is  $\Rm(v,w,\overline v,\overline w)\in \R$.
Here, $\langle v,w\rangle:= (v,\overline w)$ is the usual Hermitian inner product corresponding to the complex linear extension $(\cdot,\cdot)$ of the Riemannian metric $g$, and we have implicitly extended the curvature tensor by complex linearity.

Asking that a manifold has 
nonnegative
complex sectional curvature, i.e. that 
the number computed above is nonnegative for each 
two-dimensional  complex linear subspace $\si$ of fibres of $T^\C M$,
is a strong condition that coincides with a condition introduced by
Brendle-Schoen \cite{BS09, NW07} that is often called WPIC2. It is clearly more restrictive than nonnegative sectional curvature, since we are always free to pick $\si$ consisting only of real elements. 
Rephrased, we arrive at the more general condition of nonnegative sectional curvature by asking for nonnegativity of the complex sectional curvatures corresponding only to complex linear two-planes
$\si$ for which $\si=\overline\si$, where
$\overline\si$ is the linear two-plane obtained by taking the complex conjugate of each element of $\si$.

In practice, we would like to restrict to different subsets of 
all complex linear two-planes $\si$ by comparing $\si$ to $\overline\si$ in other ways.
If $\si$ and $\overline\si$ are orthogonal in the sense that every element of $\si$ is orthogonal to every element of $\overline\si$ with respect to the Hermitian inner product, then we say that $\si$ is totally isotropic. A single vector $v\in T^\C M$ is said to be isotropic
if $(v,v)=0$, and this is easily seen to be equivalent to being a complex multiple of some $e_1+ie_2$ with $e_1,e_2$ orthonormal.
It can be shown that an equivalent formulation of $\si$ being totally isotropic is that $\si$ is spanned by elements $e_1+ie_2$ and $e_3+ie_4$ for some orthonormal collection $e_1,e_2,e_3,e_4\in T_pM$.
A third formulation would be that every $v\in\si$ is isotropic \cite{MM88}.
Positivity of all the complex sectional curvatures corresponding to such $\si$ is the condition of \emph{positive isotropic curvature} (PIC) mentioned in the introduction. 
Nonnegativity (i.e. weak positivity) of all such curvatures is called WPIC (or NIC).
One needs to be working in dimension at least $4$ for this to make sense.

In practice, the PIC condition is too weak for many purposes.
We can strengthen the condition by considering all the planes $\si$ whose projection onto $\overline\si$ may  be the zero element (as for PIC) 
or more generally may be of complex dimension one (i.e. it is not of dimension two). Equivalently, we consider all \emph{degenerate} $\si$, i.e. that contain an element $v$ such that $(v,w)=0$ for all $w\in\si$.
Positivity of all  curvatures corresponding to such $\si$ is the condition known as PIC1. 
Nonnegativity of all such curvatures is known as WPIC1 or NIC1.
The terminology arises because an equivalent way of stating the PIC1 condition is to say that $(M,g)\times \R$ 
satisfies the PIC condition.


By picking an arbitrary orthonormal collection 
$\{e_1,e_2,e_3\}$ and considering the plane spanned by $e_1$ and the isotropic vector $v:=e_2+ie_3$, we see that the PIC1 condition implies that 
$\Rm(e_1,e_2,e_1,e_2)+\Rm(e_1,e_3,e_1,e_3)>0$. In particular, it implies $\Ric>0$, so volume comparison and compactness can be applied. 
On the other hand, in three dimensions, any of the degenerate `PIC1 planes' can be viewed as the span of $e_1$ and $e_2+ie_3$ for some
orthonormal collection $\{e_1,e_2,e_3\}$, and so PIC1 is equivalent to positive Ricci curvature in this dimension.


\section{Local Flows and Curvature Estimates}
\label{variants}
We begin by recording some minor variants of known local estimates for Ricci flow, 
and a known local existence result.
We first examine the consequences of a flow $g(t)$ satisfying  
$\cR_{g(t)} + \gamma \mathcal{I} \in \cC_{\I}$ throughout a local region in space-time.
The $n=3$ case of following result can be found in \cite[Lemma 4.1]{Topping2}, or in this form in \cite[Lemma A.1]{MT18}. 
By developing the curvature estimates of \cite{BCRW19}, and using an extension of the work of Perelman \cite[\S 11.4]{P02}, also from \cite{BCRW19}, the same proof extends to higher dimensions, as shown by Y. Lai. The statement we give differs from \cite[Lemma 3.4]{YL18} mainly in that $C_0$ does not depend on $\ga$, and will follow easily from Lai's statement by scaling.


\begin{lemma} 
\label{loc_lemma_analogue}
Given any $n \in \N$ and $v_0 > 0$, there exists a constant 
$C_0 = C_0 (n, v_0) \geq 1$ such that the following is true.
Let $\left( M , g(t) \right)$ be a smooth $n$-dimensional Ricci flow, defined for all times $t \in [0,T],$ such that for some $p \in M$ and $\ep>0$ we have $\B_{g(t)} (p,\ep) \subset \subset M$ for each $t \in [0,T],$
and so that for any $r \in (0,\ep]$ we have that 
$\VolBB_{g(0)} (p,r) \geq v_0 r^n $. 
Further assume that for some $\gamma > 0$ and all $t \in [0,T]$
we have 
\beq
	\label{alt_curv_assump}
		\cR_{g(t)} + \gamma \mathcal{I} \in \cC_{\I} \qquad \text{on} 
		\qquad \bigcup_{s \in [0,T]} \B_{g(s)} (p,\ep).
\eeq 
Then there exists $S = S(n, v_0, \gamma, \ep) > 0$ such that for all $0 < t \leq \min \left\{ S , T \right\}$ we have both
\beq
	\label{rescale_1_conc}
		|\Rm|_{g(t)}(p) \leq \frac{C_0}{t} \qquad \text{and} 
		\qquad \inj_{g(t)}(p) \geq \sqrt{ \frac{t}{C_0} }.
\eeq
\end{lemma}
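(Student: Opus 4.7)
The plan is to reduce to Lai's statement \cite[Lemma 3.4]{YL18} by a parabolic rescaling that normalises $\gamma$ to $1$, so that $C_0$ loses its $\gamma$-dependence. Concretely, for the constant $c := \sqrt{\gamma}$ I set $\tilde g(\tilde t) := c^2 g(\tilde t / c^2) = \gamma\, g(\tilde t/\gamma)$, which is again a smooth Ricci flow on $M$ for $\tilde t \in [0, \gamma T]$. Under this rescaling, distances become $d_{\tilde g} = c\, d_g$, so balls transform as $\B_{\tilde g(\tilde t)}(p, \tilde r) = \B_{g(\tilde t/\gamma)}(p, \tilde r / c)$, and the hypothesis $\B_{g(t)}(p, \ep) \subset\subset M$ for all $t \in [0,T]$ becomes $\B_{\tilde g(\tilde t)}(p, c\ep) \subset\subset M$ for all $\tilde t \in [0, \gamma T]$, with the new scale $\tilde \ep := c\ep = \sqrt{\gamma}\, \ep$.

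Next I would check that the two quantitative hypotheses are preserved with parameters that do not involve $\gamma$ after rescaling. For volumes, using $d\mu_{\tilde g} = c^n d\mu_g$, we have $\VolBB_{\tilde g(0)}(p, r) = c^n \VolBB_{g(0)}(p, r/c) \geq c^n \cdot v_0 (r/c)^n = v_0 r^n$ for all $r \in (0, \tilde \ep]$, so the same $v_0$ works in the rescaled picture. For the curvature cone condition, the $(0,4)$ curvature operator scales as $\cR_{\tilde g} = c^{-2} \cR_g$, so the assumption $\cR_{g(t)} + \gamma \ci \in \cC_{\I}$ on $\bigcup_s \B_{g(s)}(p,\ep)$ becomes $\cR_{\tilde g(\tilde t)} + \ci \in \cC_{\I}$ on $\bigcup_{\tilde s}\B_{\tilde g(\tilde s)}(p, \tilde \ep)$, where we have used that $\cC_{\I}$ is a cone (so invariant under multiplication by $c^{-2}>0$) and that $\gamma = c^2$. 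The normalising parameter is now identically $1$.

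I would then apply Lai's statement \cite[Lemma 3.4]{YL18} to $\tilde g$ with parameters $n, v_0, 1, \tilde \ep$: this furnishes a constant $C_0 = C_0(n, v_0, 1) = C_0(n, v_0)$ and a time $\tilde S = \tilde S(n, v_0, 1, \tilde \ep) = \tilde S(n, v_0, \gamma, \ep)$ such that
\[
|\Rm|_{\tilde g(\tilde t)}(p) \leq \frac{C_0}{\tilde t}, \qquad \inj_{\tilde g(\tilde t)}(p) \geq \sqrt{\tilde t/C_0}
\]
for all $0 < \tilde t \leq \min\{\tilde S, \gamma T\}$. Setting $S := \tilde S/\gamma$ and using $\tilde t = \gamma t$, the identities $|\Rm|_{g(t)} = c^2 |\Rm|_{\tilde g(\tilde t)}$ and $\inj_{g(t)} = c^{-1} \inj_{\tilde g(\tilde t)}$ give exactly the conclusions \eqref{rescale_1_conc}, with $C_0$ independent of $\gamma$ and $\ep$.

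The only mildly subtle point is to verify that the form of Lai's hypotheses matches what we obtain after rescaling — in particular that the volume lower bound in Lai's lemma is formulated at time $0$ on balls up to radius $\tilde \ep$ (not a smaller radius), and that the containment of forward balls in $M$ is preserved — but this is just bookkeeping. I do not expect any genuine obstacle: the scaling argument is clean because the curvature cone $\cC_{\I}$, the Ricci flow equation, and the volume density $v_0 r^n$ are all homogeneous in the correct degrees.
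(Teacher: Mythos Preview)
Your approach is essentially the same as the paper's: both parabolically rescale by $\gamma$ to normalise the cone condition to $\cR + \ci \in \cC_{\I}$ and then invoke Lai's Lemma 3.4, observing that the conclusions \eqref{rescale_1_conc} are scale-invariant. The only real difference is that the paper first makes the harmless reductions $\ep \le 1$ and $\gamma \ge 1/\ep^2$, which guarantee that the rescaled radius $\sqrt{\gamma}\,\ep$ is at least $1$; this lets them apply Lai's lemma at unit scale and read off $C_0 = C_0(n,v_0)$ and $S_0 = S_0(n,v_0)$ with no residual $\tilde\ep$-dependence. Your ``mildly subtle point'' about matching Lai's hypotheses is exactly what these WLOG steps handle, so there is no genuine gap --- but it is worth noting that without them, a direct application of Lai's lemma at the variable radius $\tilde\ep$ could in principle reintroduce a hidden dependence of $C_0$ on $\tilde\ep = \sqrt{\gamma}\,\ep$.
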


\begin{proof}[Proof of Lemma \ref{loc_lemma_analogue}]
Without loss of generality we may  assume that $\ep \leq 1$ (otherwise replace $\ep$ by $1$)
and that $\gamma \geq \frac{1}{\ep^2}$ (otherwise replace $\ga$ by $\frac{1}{\ep^2}$).
Consider the rescaled flow $g_{\gamma} (t) := \gamma g ( \frac{t}{\gamma})$ for $0 \leq t \leq \gamma T.$ 
Since $\gamma^{-\frac{1}{2}} \leq \ep,$ we have 
\beq
	\label{volume_req}
		\VolBB_{g_{\gamma}(0)}(p,1) = \gamma^{\frac{n}{2}} \VolBB_{g(0)} ( p , \gamma^{-\frac{1}{2}} ) \geq \gamma^{\frac{n}{2}} \gamma^{-\frac{n}{2}} v_0 
		=v_0.
\eeq
Moreover, for any $0 \leq t \leq \gamma T$, again using that 
$\gamma^{-\frac{1}{2}} \leq \ep,$ 
\beq
	\label{compact_req}
		\B_{g_{\gamma}(t)} ( p , 1) 
		= \B_{g \left( \frac{t}{\gamma} \right)} \left( p , \gamma^{-\frac{1}{2}} \right)
		\subset \B_{g \left( \frac{t}{\gamma} \right)} (p,\ep) \subset \subset M,
\eeq
which in turn tells us that
\beq
	\label{union_good}
		\bigcup_{s \in \left[ 0 , \gamma T \right]} \B_{g_{\gamma} (s)} (p,1)
		\subset 
		\bigcup_{s \in \left[ 0 , T \right]} \B_{g (s)} (p,\ep).
\eeq
Together, \eqref{alt_curv_assump} and \eqref{union_good} yield that the rescaled flow $g_{\gamma}(t)$ satisfies
\beq
	\label{curv_req}
		\cR_{g_{\gamma}(t)} + \mathcal{I} \in \cC_{\I} 
		\qquad \text{on} \qquad 
		\bigcup_{s \in \left[ 0 , \gamma T \right]} \B_{g_{\gamma} (s)} (p,1) 
		\qquad \text{for all} \qquad t \in \left[ 0 , \gamma T\right].
\eeq
Combining \eqref{volume_req}, \eqref{compact_req} and \eqref{curv_req} we have the hypotheses to be able to apply 
Lemma 3.4 in \cite{YL18}. 
Doing so gives us constants $C_0 = C_0 (n, v_0) \geq 1$ and $S_0 = S_0 (n , v_0) > 0$ such that for all $0 < t \leq \min \left\{ \gamma T , S_0 \right\}$ 
the conclusion \eqref{rescale_1_conc} hold for $g_\ga(t)$ instead of $g(t)$. But these estimates are invariant under parabolic scaling, so the lemma holds with $S=S_0/\ga$.
\end{proof}
\vskip 4pt
\noindent
Next, we record a result that generalises the double-bootstrap lemma of Simon and the second author, see Lemma 9.1 in \cite{Topping1} and Lemma 4.2 in \cite{Topping2}, to higher dimensions.
This result is a minor adaptation of Proposition II.2.6 in the thesis of R. Hochard \cite{Hoc19} (see Proposition \ref{Hoc_II.2.6} here).

\begin{lemma}[{Propagation of lower curvature bounds; Variant of Proposition II.2.6 in \cite{Hoc19}}]
\label{DB}
Let $n \in \N$ with $n \geq 3$ and $c_0 , \al_0 > 0.$ Suppose that $( M , g(t) )$ is a smooth $n$-dimensional Ricci flow, 
defined for $0 \leq t \leq T,$ and satisfying that for some point $x \in M$ and $\ep>0$ we have $\B_{g(0)} (x , \ep) \subset \subset M.$ 
We further assume that
\beq
	\label{DB_hyp_1}
		| \Rm |_{g(t)} \leq \frac{c_0}{t} \qquad \text{and} \qquad \inj_{g(t)} \geq \sqrt{ \frac{t}{c_0} } 
\eeq
throughout $\B_{g(0)} (x , \ep) \times (0, T]$ and that
\beq
	\label{DB_hyp_2}
		\cR_{g(0)} + \al_0 \cal{I} \in \cal{C}
\eeq 
throughout $\B_{g(0)} (x,\ep),$
where $\cC$ is one of the invariant curvature cones 
$\cC_{CO}, \cC_{\I}$ or $\cC_{CSC}$,
that are described in Remark \ref{other_cones}.
Then there exist constants $S = S(n,c_0,\al_0,\ep) >0$ and 
$K = K ( n , c_0 , \al_0, \ep ) > 0$ such that
\beq
	\label{DB_conc}
		\cR_{g(t)} (x) + K \cal{I} \in C
\eeq
for all times $0 \leq t \leq \min \left\{ S , T \right\}.$
\end{lemma}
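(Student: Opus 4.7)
The plan is to deduce this lemma from Proposition \ref{Hoc_II.2.6} (Hochard's Proposition II.2.6) via a parabolic rescaling that normalizes the local radius to $1$. First I would parabolically rescale by setting $\tilde g(\tilde t) := \ep^{-2} g(\ep^2 \tilde t)$ for $\tilde t \in [0, \ep^{-2} T]$. Under this rescaling, distances expand by a factor of $\ep^{-1}$, so $\B_{g(0)}(x,\ep) = \B_{\tilde g(0)}(x,1)$, while curvature is multiplied by $\ep^2$. The bounds \eqref{DB_hyp_1} are invariant under parabolic rescaling, so $|\Rm|_{\tilde g(\tilde t)} \leq c_0/\tilde t$ and $\inj_{\tilde g(\tilde t)} \geq \sqrt{\tilde t / c_0}$ persist throughout $\B_{\tilde g(0)}(x,1) \times (0, \ep^{-2}T]$, while the curvature lower bound \eqref{DB_hyp_2} transforms into $\cR_{\tilde g(0)} + (\al_0 \ep^2)\, \mathcal{I} \in \cC$ throughout $\B_{\tilde g(0)}(x,1)$.

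Next, I would apply Proposition \ref{Hoc_II.2.6} to the rescaled flow on the unit ball with curvature parameter $\tilde \al_0 := \al_0 \ep^2$. This yields constants $\tilde S = \tilde S(n, c_0, \tilde\al_0)$ and $\tilde K = \tilde K(n, c_0, \tilde\al_0)$, both depending only on $n$, $c_0$, $\al_0$ and $\ep$, such that $\cR_{\tilde g(\tilde t)}(x) + \tilde K\, \mathcal{I} \in \cC$ for every $\tilde t \in [0, \min\{\tilde S, \ep^{-2}T\}]$. Undoing the rescaling via $t = \ep^2 \tilde t$, the desired conclusion \eqref{DB_conc} holds with $S := \ep^2 \tilde S$ and $K := \ep^{-2} \tilde K$, which again depend only on $n$, $c_0$, $\al_0$ and $\ep$.

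The main obstacle, which is therefore absorbed into the statement of Proposition \ref{Hoc_II.2.6}, is verifying that Hochard's proof applies for each of the three cones $\cC_{CO}$, $\cC_{\I}$ and $\cC_{CSC}$, rather than only for the particular cone in which Hochard stated his original result. As indicated in Remark \ref{other_cones}, the argument hinges on the cone being ODE-invariant under the Hamilton equation governing the evolution of curvature under Ricci flow, combined with an appeal to the BCRW curvature estimates; and each of our three cones enjoys the required invariance by the works of Hamilton, of Brendle-Schoen together with Nguyen, and of Wilking, respectively. Thus Hochard's strategy carries through uniformly in the choice of $\cC$, granting us Proposition \ref{Hoc_II.2.6} in the form needed here and completing the proof.
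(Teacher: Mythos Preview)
Your rescaling idea is sound and matches the spirit of the paper's proof, but there is a genuine gap: you never verify the hypothesis $\Sc_{g(t)}\geq -1$ of Proposition~\ref{Hoc_II.2.6}. That proposition requires a \emph{uniform scalar curvature lower bound for all times} $t\in[0,T]$, not just at $t=0$. The assumption \eqref{DB_hyp_2} is only imposed at the initial time, and the $|\Rm|\leq c_0/t$ bound by itself gives no uniform-in-$t$ scalar curvature control near $t=0$. So as stated, you cannot invoke Proposition~\ref{Hoc_II.2.6} directly.

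The paper's proof handles this by first observing that $\cC\subset\cC_{\I}$ forces $\Sc_{g(0)}\geq -\lambda$ on the initial ball, then using the shrinking balls lemma to nest time-$t$ balls inside time-$0$ balls, and finally appealing to Lemma~8.1 of \cite{Topping1} to propagate the scalar curvature lower bound forward in time on a slightly smaller ball (this is where the rescaling to $\ep=4$ rather than $\ep=1$ buys room). Only after this step is Proposition~\ref{Hoc_II.2.6} applied. A secondary issue: Proposition~\ref{Hoc_II.2.6} has no free curvature parameter $\tilde\al_0$ --- it requires $\cR_{g(0)}+\mathcal{I}\in\cC$ with coefficient exactly $1$ --- so the rescaling must also be chosen to normalise this, and the proposition outputs no time threshold $\tilde S$ of its own (the $S$ in the lemma arises from the auxiliary steps above).
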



\begin{proof}[Proof of Lemma \ref{DB}]
By making a single parabolic rescaling, it suffices to prove the lemma in the case
that $\ep=4$.

Regardless of which curvature cone $\cC$ we are working with, we always have the inclusion $\cC \subset \cC_{\I}$. 
Thus there exists $\lambda = \lambda(n ,\al_0) \geq 1$ such that 
$\Ric_{g(0)} \geq -\lambda$ and $\Sc_{g(0)} \geq -\lambda$ 
throughout $\B_{g(0}(x,4)$. 

By the shrinking balls lemma \ref{nested balls}, for sufficiently small $S\in (0,1]$, depending only on $n$ and $c_0$, we can be sure that for all $z\in \B_{g(0)}(x,2)$ we have
$\B_{g(t)}(z,1)\subset \subset \B_{g(0)}(z,2)\subset \B_{g(0)}(x,4)$
for all $0\leq t\leq \min\{T,S\}$. 
We will reduce $S>0$ further below, with the understanding that it can only depend on
$n$, $c_0$ and $\al_0$.
By Lemma 8.1 of \cite{Topping1}, applied with $x_0$ there equal to $z$ here, we can deduce  (for possibly smaller $S$) that
$\Sc_{g(t)}\geq -2\la$ on $\B_{g(0)}(x,2)$ for all $0\leq t\leq \min\{T,S\}$.

This allows us to apply Proposition \ref{Hoc_II.2.6} to an appropriately parabolically scaled up version of $g(t)$, to deduce that 
$\cR_{g(t)} (x) + K \cal{I} \in C$ for some $K>0$ depending only on $n$, $c_0$ and $\la$, i.e. on $n$, $c_0$ and $\al_0$, for all $0\leq t\leq \min\{T,S\}$ (for possibly smaller $S$).
\end{proof}

\vskip 4pt
\noindent
We conclude this section by recording that it is possible to find a local solution to the Ricci flow, assuming a lower $\K$ bound.
This is the content of Theorem 1.1 in \cite{YL18}; we state a minor variant that is more convenient for our purposes.
In particular, we reduce the initial noncollapsedness hypothesis to a 
lower volume bound for a single unit ball, 
rescale the result to apply to any ball of radius 
strictly larger than one,
and add a lower injectivity radius bound 
to the conclusion.
The injectivity radii bounds are implicitly obtained within the proof of Theorem 1.1 in \cite{YL18}, 
and the following result simply makes these explicit.

\begin{theorem}[Local Existence; Variant of Theorem 1.1 in \cite{YL18}]
\label{loc_exist_YL}
Given $n \in \N$, $R\geq 1$  and $\ep, \al_0 , v_0 > 0$ there exist positive constants 
$C , \tau >0$,
both depending only on $n, \al_0, v_0, \ep$ and $R$, 
for which the following is true.
Let $\left( M , g_0, x_0 \right)$ be a smooth pointed Riemannian $n$-manifold, and 
suppose that $\B_{g_0} (x_0 , R+\ep) \subset \subset M$ and 
\beq
	\label{loc_exist_curv_assump}
		\cR_{g_0} + \al_0 \mathcal{I} \in \cC_{\I} 
		\qquad \text{throughout} \qquad
		\B_{g_0} (x_0 , R+\ep)
\eeq
and 
\beq
	\label{loc_exist_vol_assump}
		\VolBB_{g_0} (x_0,1) \geq v_0.
\eeq
Then there exists a smooth Ricci flow $g(t)$ defined for $0 \leq t \leq \tau$ on $\B_{g_0} (x_0 , R),$ with $g(0) = g_0$ where defined, such that for all $0 < t \leq \tau$ we have
\beq
	\label{loc_exist_conc}
		|\Rm|_{g(t)} \leq \frac{C}{t} 
		\qquad \text{and} \qquad
		\inj_{g(t)} \geq \sqrt{\frac{t}{C}}
		\qquad \text{and} \qquad 
		\cR_{g(t)} + C  \mathcal{I} \in \cC_{\I} 
\eeq
throughout $\B_{g_0} (x_0 , R).$
\end{theorem}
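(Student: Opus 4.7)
The plan is to derive this from Theorem~1.1 of \cite{YL18} by handling three discrepancies with Lai's statement: our noncollapsing hypothesis concerns only the single unit ball $\B_{g_0}(x_0,1)$, our region has arbitrary radius $R\geq 1$, and we require an injectivity radius bound.

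First I would address the noncollapsing. The hypothesis $\cR_{g_0}+\al_0\ci\in\mathcal{C}_{\I}$ implies a lower Ricci bound $\Ric_{g_0}\geq-(n-1)\al_0$ on $\B_{g_0}(x_0,R+\ep)$. Starting from $\VolBB_{g_0}(x_0,1)\geq v_0$, I would chain the relative Bishop--Gromov volume comparison along a minimizing geodesic from $x_0$ to an arbitrary $y\in\B_{g_0}(x_0,R)$: each step applies Bishop--Gromov on a ball $\B_{g_0}(x_i,\ep)\subset\B_{g_0}(x_0,R+\ep)$, the chain length is controlled by $2R/\ep$, and each step loses a factor depending only on $n,\al_0,\ep$. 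This yields a uniform bound $\VolBB_{g_0}(y,\ep)\geq v_1$ with $v_1=v_1(n,\al_0,v_0,\ep,R)$, and a final Bishop--Gromov step at $y$ gives $\VolBB_{g_0}(y,r)\geq v_1'r^n$ for all $r\in(0,\ep]$, which is the uniform noncollapsing input required by Lai's theorem.

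The arbitrary radius $R$ is handled by a single parabolic rescaling $g_0\mapsto R^{-2}g_0$, turning $\B_{g_0}(x_0,R+\ep)$ into a ball of radius $1+\ep/R$ and transforming the curvature and volume constants in an $R$-dependent but explicit way. Feeding the rescaled data into Lai's theorem and undoing the rescaling then produces, on $\B_{g_0}(x_0,R)\times[0,\tau]$, a smooth Ricci flow with $|\Rm|_{g(t)}\leq C/t$ and the preserved lower cone condition $\cR_{g(t)}+C\ci\in\mathcal{C}_{\I}$, for constants $C,\tau$ depending only on $n,\al_0,v_0,\ep,R$.

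For the injectivity radius I would fix an arbitrary $p\in\B_{g_0}(x_0,R)$ and invoke Lemma~\ref{loc_lemma_analogue} at $p$, at a small scale $\ep'=\ep'(n,\al_0,v_0,\ep,R)>0$ chosen so that $\B_{g(t)}(p,\ep')\subset\subset M$ for all $t\in[0,\tau]$; this enclosure is arranged by the shrinking balls lemma applied to $\B_{g_0}(p,\ep')$ together with the $|\Rm|\leq C/t$ bound, after further shrinking $\tau$ if necessary. The required volume bound $\VolBB_{g_0}(p,r)\geq v_1'r^n$ for $r\in(0,\ep']$ was produced in the noncollapsing step, and the lower cone condition along the flow is already part of Lai's output. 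Lemma~\ref{loc_lemma_analogue} then yields $\inj_{g(t)}(p)\geq\sqrt{t/C_0}$ on a possibly shorter time interval, and after absorbing $C_0$ into $C$ and shrinking $\tau$ we obtain the injectivity bound uniformly across $\B_{g_0}(x_0,R)$. The main obstacle I anticipate is bookkeeping: ensuring that the scale $\ep'$, the noncollapsing constant $v_1'$, and all resulting $C,\tau$ are uniform in $p$ and depend only on $n,\al_0,v_0,\ep,R$; this reduces to the uniformity of the chain-of-balls step, which the above construction makes manifest.
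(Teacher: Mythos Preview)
Your approach is essentially the paper's: propagate the single volume lower bound via Bishop--Gromov, feed into Lai's Theorem~1.1, then extract the injectivity radius from Lemma~\ref{loc_lemma_analogue} using the shrinking balls lemma. There is, however, a genuine gap in the injectivity radius step. To apply Lemma~\ref{loc_lemma_analogue} at $p\in\B_{g_0}(x_0,R)$ you need not merely $\B_{g(t)}(p,\ep')\subset\subset M$, but $\B_{g(t)}(p,\ep')$ compactly contained in the domain where the flow $g(t)$ is actually defined and where the lower cone bound $\cR_{g(t)}+C\ci\in\cC_{\I}$ holds; moreover the shrinking balls lemma itself needs the flow and the $C/t$ curvature bound on a $g_0$-ball about $p$. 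In your setup all of these are only available on $\B_{g_0}(x_0,R)$, so for $p$ within distance $\ep'$ of the boundary of that ball the argument collapses. The paper avoids this by running Lai's theorem on a strictly larger ball than the final target (obtaining the flow on $\B_{g_0}(x_0,R+3)$ after first scaling so that $\ep\geq 6$), leaving a buffer of fixed width in which the shrinking balls lemma and Lemma~\ref{loc_lemma_analogue} can be applied at scale $1$ uniformly for every $p\in\B_{g_0}(x_0,R)$.

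A related issue is your choice of rescaling. Scaling by $R^{-2}$ sends the annulus width $\ep$ to $\ep/R$, which can be arbitrarily small; Lai's Theorem~1.1 consumes a fixed amount of radius between the input region and the region carrying the output flow, so for large $R$ you may not even land on $\B_{g_0}(x_0,R)$, let alone leave room for the buffer above. The paper instead scales \emph{up} (by a factor depending on $\ep$ and $\al_0$) so that one may take $\al_0\leq 1$ and $\ep\geq 6$; Bishop--Gromov then restores the volume lower bound, and the ample $\ep$ makes all the subsequent inclusions comfortable. Your argument is easily repaired by adopting this scaling and inserting the buffer, but as written the injectivity radius step does not go through at boundary points.
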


\begin{proof}[Proof of Theorem \ref{loc_exist_YL}]
By parabolically scaling up the flow by a factor depending only on $\ep$ and $\al_0$, we may assume that 
$\al_0\leq 1$ and $\ep\geq 6$.
Note that as long as we scale up, Bishop-Gromov will ensure that the volume condition \eqref{loc_exist_vol_assump} will be satisfied for some new $v_0$ depending on the old $v_0$, $\al_0$ and $n$.
(Recall that the curvature condition \eqref{loc_exist_curv_assump} implies a lower Ricci bound.)

In fact, repeatedly applying Bishop-Gromov tells us that, for the scaled up flow, for all $x\in \B_{g_0}(x_0,R+4)$, 
we have a positive lower bound for $\VolBB_{g_0}(x,1)$ that depends only on $v_0$, $\al_0$, $n$ and $R$. 
More generally we obtain such a lower bound for $r^{-n}\VolBB_{g_0}(x,r)$ for any $r\in (0,1]$, with the bound independent of $r$.

This puts us in a position to apply \cite[Theorem 1.1]{YL18} with $s_0=R+5$ to obtain a Ricci flow on $\B_{g_0} ( x_0 , R + 3 )$ and
deduce all the conclusions aside from the injectivity radius bound on \eqref{loc_exist_conc}.
However, this follows easily from Lemma \ref{loc_lemma_analogue} as follows.
The shrinking balls lemma \ref{nested balls} 
allows us to deduce that if
$x \in \B_{g_0} ( x_0 , R )$ then 
$\B_{g(t)} (x,1) 
\subset \subset 
\B_{g_0} ( x , 2 )
\subset 
\B_{g_0} ( x_0 , R + 2 )$,
for all times $0 \leq t \leq \tau$, if we reduce $\tau>0$  appropriately.
Thus 
$\cR_{g(t)} + C \mathcal{I} \in \cC_{\I}$ throughout 
$\bigcup_{s \in [0 , \tau]} \B_{g(s)} \left(x,1\right)$
for all $t \in [ 0 , \tau ]$, and we are directly in a position to apply 
Lemma \ref{loc_lemma_analogue} with $\ep$ there equal to $1$.
\end{proof}

\section{The Pyramid Extension Lemma}
\label{constants}

The following result is an analogue of the Pyramid Extension Lemma \cite[Lemma 2.1]{MT18} in  higher dimensions.
It can be considered an extension of the local existence theorem \ref{loc_exist_YL} of Y. Lai.

\begin{lemma}[\bf \em Pyramid Extension Lemma]
\label{PEL}
Let $\al_0 , v_0 > 0$ and $n \in \N$ with $n \geq 3$.
Suppose 
$(M,g_0,x_0)$ is a pointed complete Riemannian $n$-manifold 
such that 
$\VolBB_{g_0} (x_0 , 1) \geq v_0$ and  
$\cR_{g_0} + \al_0 \mathcal{I} \in \cC_{\I}$ throughout $M.$
Then there exist  increasing sequences
$C_k\geq 1$ and $\al_k>0,$ 
and a decreasing sequence 
$T_k> 0$, all defined for $k\in\N$ and depending only on $n, \al_0$ and $v_0$, with the following properties. 
\begin{compactenum}[1)]
\item
For each $k\in \N$ there exists a Ricci flow $g(t)$ on 
$\B_{g_0}(x_0,k)$ 
for $t\in [0,T_k]$ such that $g(0)=g_0$ where defined and so that 
$|\Rm|_{g(t)}\leq C_k/t$ and $\inj_{g(t)} \geq \sqrt{ t/C_k}$
for all $t\in (0,T_k]$ and 
$\cR_{g(t)} + \al_k \mathcal{I} \in \cC_{\I}$ for all $t\in [0,T_k]$. 
\item
Given any Ricci flow $\tilde g(t)$ on 
$\B_{g_0}(x_0,k+1)$ over a time interval $t\in [0,S]$, $S>0$, with $\tilde g(0)=g_0$ where defined, 
satisfying for all $t \in (0,S]$ that $|\Rm|_{\tilde g(t)}\leq C_{k+1}/t$ 
and $\inj_{\tilde g(t)} \geq \sqrt{ t/C_{k+1}}$, 
we may choose the Ricci flow $g(t)$ above to agree with the restriction of $\tilde g(t)$ to $\B_{g_0}(x_0,k)$ 
for times $t\in [0,\min\{S, T_k\}]$. 
\end{compactenum}
\end{lemma}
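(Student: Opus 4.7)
The plan is to follow the strategy of the three-dimensional version in \cite[Lemma 2.1]{MT18}, with the PIC1 double bootstrap Lemma \ref{DB} replacing its Ricci counterpart and Theorem \ref{loc_exist_YL} replacing Shi-type local existence. The heart of the argument is an \emph{improvement procedure}: given any Ricci flow $h(t)$ on $\B_{g_0}(x_0, k+1)$ for $t \in [0, S]$ with $h(0) = g_0$ on its domain and $|\Rm|_{h(t)} \leq c_0/t$, $\inj_{h(t)} \geq \sqrt{t/c_0}$ throughout, this procedure produces, on the sub-cylinder $\B_{g_0}(x_0, k) \times [0, \min\{S, T_k\}]$, the bounds $|\Rm|_{h(t)} \leq C_k/t$, $\inj_{h(t)} \geq \sqrt{t/C_k}$ and $\cR_{h(t)} + \al_k \ci \in \cC_{\I}$, with $C_k, \al_k, T_k$ depending only on $n, \al_0, v_0, k$ and $c_0$. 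Both conclusions of the lemma will reduce to this procedure. To set the sequences, I first invoke Bishop--Gromov (using the lower Ricci bound that follows from $\cR_{g_0} + \al_0 \ci \in \cC_{\I}$ together with $\VolBB_{g_0}(x_0, 1) \geq v_0$) to produce a constant $v_k = v_k(n, \al_0, v_0, k) > 0$ with $\VolBB_{g_0}(p, r) \geq v_k r^n$ for every $p \in \B_{g_0}(x_0, k+1)$ and $r \in (0, 1]$. I then set $C_k$ to be the constant $C_0(n, v_k)$ of Lemma \ref{loc_lemma_analogue} (adjusted upward so as to be increasing in $k$), and take $\al_k$ and $T_k$ to be the maximum and minimum, respectively, of the corresponding outputs of the improvement procedure when $c_0$ is specialised either to $C_{k+1}$ (for the compatibility statement~2)) or to the Theorem \ref{loc_exist_YL} constant $C^Y$ with $R = k+1$ (for the existence statement~1)). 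All these dependencies are on $n, \al_0, v_0$ and $k$ only.

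\textbf{Carrying out the improvement procedure.} Fix any $p \in \B_{g_0}(x_0, k)$. Then $\B_{g_0}(p, 1) \subset \subset \B_{g_0}(x_0, k+1)$ by the triangle inequality and completeness, and the initial cone bound $\cR_{g_0} + \al_0 \ci \in \cC_{\I}$ holds globally. Lemma \ref{DB} applied to $h(t)$ at $p$ with $\ep = 1$ then yields $\cR_{h(t)}(p) + K \ci \in \cC_{\I}$ for $t \in [0, \min\{S, S_1\}]$, where $K$ and $S_1$ depend only on $n, \al_0$ and $c_0$. Ranging $p$ over $\B_{g_0}(x_0, k)$, the cone lower bound holds throughout that ball for the indicated time range. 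To apply Lemma \ref{loc_lemma_analogue} at $p$ one needs the cone bound on a parabolic cylinder $\bigcup_{s \in [0, T]} \B_{h(s)}(p, \ep')$; the shrinking balls lemma combined with $|\Rm|_{h(t)} \leq c_0/t$ ensures this cylinder remains inside $\B_{g_0}(x_0, k)$ for a suitable $\ep' > 0$ and a small time depending only on $n$ and $c_0$. Invoking Lemma \ref{loc_lemma_analogue} with $\gamma = K$ and volume constant $v_k$ now delivers $|\Rm|_{h(t)}(p) \leq C_0(n, v_k)/t = C_k/t$ and $\inj_{h(t)}(p) \geq \sqrt{t/C_k}$ on a further small time range, with the crucial feature that $C_0$ does not depend on $\gamma$ nor on $c_0$.

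\textbf{Verification and main obstacle.} For property~2) I simply take $h = \tilde g$ and $c_0 = C_{k+1}$; the restriction of $\tilde g$ to $\B_{g_0}(x_0, k) \times [0, \min\{S, T_k\}]$ is then a Ricci flow satisfying all the bounds required in~1), and can be chosen as $g(t)$. For property~1) in isolation, I first apply Theorem \ref{loc_exist_YL} with $R = k+1$ and $\ep = 1$ to obtain a Ricci flow $\bar g$ on $\B_{g_0}(x_0, k+1)$ for time $\tau = \tau(n, \al_0, v_0, k) > 0$ with constants $C^Y = C^Y(n, \al_0, v_0, k)$, and then run the improvement procedure with $c_0 = C^Y$. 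The main obstacle will be careful bookkeeping: verifying that the single sequence $T_k, \al_k, C_k$ suffices for both specialisations of $c_0$, that the pointwise cone bound of Lemma \ref{DB} upgrades consistently to a bound on a genuine parabolic cylinder so that Lemma \ref{loc_lemma_analogue} applies, and that all constants depend only on $n, \al_0, v_0$ and $k$ with the correct monotonicity. These issues are resolved by taking suprema or infima over finitely many auxiliary quantities, each of which depends only on the intrinsic data.
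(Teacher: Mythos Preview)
Your improvement procedure is sound as far as it goes, but it does not prove the lemma: you have overlooked the crucial case $S < T_k$. Part~1) requires a Ricci flow $g(t)$ on $\B_{g_0}(x_0,k)$ for the \emph{full} time interval $[0,T_k]$, and part~2) requires that this $g(t)$ agree with $\tilde g(t)$ on $[0,\min\{S,T_k\}]$. When $S<T_k$ --- and this is precisely the case used in the proof of Theorem~\ref{Ricci Flow}, where one extends a flow existing for time $T_l$ to the longer interval $[0,T_{l-1}]$ on a smaller ball --- your procedure only yields improved estimates for $\tilde g$ on $[0,S]$, and produces no flow at all on $(S,T_k]$. Restricting $\tilde g$ cannot be ``chosen as $g(t)$'' because it does not live long enough.

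The paper's proof addresses this by \emph{restarting} the flow: one applies the propagation lemma~\ref{DB} and the volume control lemma~\ref{Volume 2} to show that $\tilde g(\tau)$, where $\tau=\min\{S,T_k\}$, satisfies the hypotheses of the local existence theorem~\ref{loc_exist_YL} on a ball slightly larger than $\B_{g_0}(x_0,k)$, obtains a new flow $h(t)$ starting from $\tilde g(\tau)$, and concatenates $\tilde g$ with $h$ to obtain a flow on $[0,\tau+T_k]\supset[0,T_k]$. Only then does one apply Lemma~\ref{loc_lemma_analogue} to the concatenated flow to secure the uniform $C_k/t$ bound across the full interval (the concatenated flow has curvature bounds that deteriorate at time $\tau$, so Lemma~\ref{loc_lemma_analogue} is genuinely needed, not merely improving constants). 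Your argument contains the right ingredients --- Lemma~\ref{DB}, Bishop--Gromov, Lemma~\ref{loc_lemma_analogue} --- but is missing this restart-and-concatenate step, which is the mechanism by which the lemma actually \emph{extends in time}.
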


\begin{proof}[Proof of Lemma \ref{PEL}]
We will refine the strategy of Lemma 2.1 in \cite{MT18}, with the roles of the double bootstrap lemma 9.1 in \cite{Topping1} 
and the local lemma A.1 in \cite{MT18} being played by the propagation lemma \ref{DB} and  Lemma \ref{loc_lemma_analogue} here, respectively.

The first part of the lemma, giving the initial existence statement for $g(t)$, follows immediately by the local existence theorem \ref{loc_exist_YL} with $R=k$ and $\ep=1$, giving 
$C_k\geq 1$, $\al_k>0$ and $T_k>0$ depending only on $n$, $\al_0$, $v_0$ and $k$.
We will need to increase 
$C_k$ and $\al_k$, and decrease $T_k$, in order to establish the remaining claims of the lemma. 

Recall that $\cR_{g_0} + \al_0 \mathcal{I} \in \cC_{\I}$ throughout $M$ implies that $\Ric_{g_0} \geq -D$ throughout $M$ for some $D = D (n, \al_0) >0.$ Thus,
by Bishop-Gromov, for all $k\in\N$, there exists $v_k>0$ depending only on $k$, $n,$ $\al_0$ and $v_0$ such that 
if $x\in \B_{g_0}(x_0,k+1)$ and $r\in (0,1]$ then 
$\VolBB_{g_0}(x,r)\geq v_k r^n$.

We increase each $C_k$ to be at least as large as the constant $C_0$ retrieved from Lemma \ref{loc_lemma_analogue}  
with $v_0$ there equal to $v_k$ here. Note that we are not actually applying Lemma \ref{loc_lemma_analogue}, but simply retrieving a constant in preparation for its application at the end of the proof.
By inductively replacing $C_k$ by $\max\{C_k,C_{k-1}\}$ for $k=2,3,\ldots$, we can additionally assume that $C_k$ is increasing in $k$. 
Thus $C_k$ still depends 
only on $k$, $n$, $\al_0$ and $v_0$, and can be fixed for the remainder of the proof.

%
%
%

Suppose now that we would like to extend a Ricci flow $\tilde g(t)$.
Appealing to the propagation lemma \ref{DB} 
centred at each $x\in \B_{g_0}(x_0,k+\half)$, and with $\ep=\half$
and $c_0=C_{k+1}$, 
after possibly reducing $T_k>0$ and increasing $\al_k$, 
depending only on $n$, $C_{k+1}$ 
and $\al_0$,  and hence only on $n$, $k$, $\al_0$ and $v_0$ as before, 
we may assume that for all $t\in [0,\min\{S,T_k\}]$ 
we have 
$\cR_{\tilde g(t)} + \al_k \mathcal{I} \in \cC_{\I}$
throughout 
$\B_{g_0}(x_0,k+\half)$.

A first consequence of this estimate is that 
$\Ric_{\tilde{g}(t)} \geq - D_k $ 
over the same region of space-time,
for some $D_k > 0$ depending only on $n$ and $\al_k,$
i.e. only on $k$, $n$, $\al_0$ and $v_0.$
In turn, these Ricci lower bounds give  
better volume bounds via Lemma \ref{Volume 2}. 
We apply that result with $R=k+\frac13$ and $\ep=\frac16$ 
to obtain that for every 
$t\in [0,\min\{S,T_k\}]$, where we have reduced $T_k>0$ 
again without adding any additional dependencies, 
we have 
\beq
\label{new_inclusion}
\textstyle
\B_{\ti g(t)}(x_0,k+\frac13)\subset \B_{g_0}(x_0,k+\half),
\eeq
and 
$\VolBB_{\ti g(t)}(x_0,1)\geq \mu_k>0$, 
where 
$\mu_k$ depends only on $n$, $v_0$, $k$, and $\al_0$.

A further reduction of $T_k>0$ will ensure appropriate nesting of balls defined at different times. 
By the expanding balls lemma \ref{expanding balls}, 
exploiting again our lower Ricci bounds, we deduce that
\beq
\label{ball_inclusion_EBL}
\left\{
\begin{aligned}
&
\textstyle
\B_{g_0}(x_0,k+\frac15)\subset \B_{\ti g(t)}(x_0,k+\frac14)\\
&
\textstyle
\B_{g_0}(x_0,k)\subset \B_{\ti g(t)}(x_0,k+\frac{1}{20})
\end{aligned}
\right.
\eeq
and by the shrinking balls lemma \ref{nested balls}, we deduce that 
\beq
\label{new_SBL_inc}
\textstyle
\B_{\ti g(t)}(x,\frac{1}{6})\subset \B_{g_0}(x,\frac15)
\qquad
\text{for every }x\in \B_{g_0}(x_0,k),
\eeq
all for $t\in [0,\min\{S,T_k\}]$,
where $T_k>0$ has been reduced appropriately, without additional dependencies.

At this point we can temporarily fix $T_k$ and try to find our desired extension $g(t)$ of $\ti g(t)$ by considering 
$\ti g(\tau)$ for $\tau:=\min\{S,T_k\}>0$ and restarting the flow from there using the local existence theorem \ref{loc_exist_YL}.
(Note that $\tau$ is now fixed, but we will make  further reductions of $T_k$ later.)

In order to do so, note that $\ti g(\tau)$ satisfies the estimates 
$\cR_{\tilde g(\tau)} + \al_k \mathcal{I} \in \cC_{\I}$ 
on $\B_{g_0}(x_0,k+\frac12)\supset\B_{\ti g(\tau)}(x_0,k+\frac13)$, 
by \eqref{new_inclusion},
and $\VolBB_{\ti g(\tau)}(x_0,1)\geq \mu_k>0$.


The output of the local existence theorem \ref{loc_exist_YL}, 
applied with 
$M=\B_{g_0}(x_0,k+1)$, 
$R=k+\frac14$,
$\ep=\frac1{12}$,
$\al_0=\al_k$, and 
$g_0=\ti g(\tau)$, 
is that after reducing $T_k>0$, 
still depending only on $n$, $\al_0$, $k$ and $v_0$, there exists a Ricci flow $h(t)$ on $\B_{\ti g(\tau)}(x_0,k+\frac14)$ 
for $t\in [0,T_k]$, with $h(0)=\ti g(\tau)$ where defined, 
and such that 
$\cR_{h(t)} + \al_k \mathcal{I} \in \cC_{\I}$
(after possibly increasing $\al_k$ further, still depending only on 
$n$, $\al_0$, $k$ and $v_0$)
and $|\Rm|_{h(t)}\leq c_k/t$, where $c_k$ also depends only on 
$n$, $\al_0$, $k$ and $v_0$.
By the first inclusion of \eqref{ball_inclusion_EBL}, this flow is defined throughout 
$\B_{g_0}(x_0,k+\frac15)$. 

Define a concatenated Ricci flow on 
$\B_{\ti g(\tau)}(x_0,k+\frac14)\supset\B_{g_0}(x_0,k+\frac15)$ 
for $t\in [0,\tau+T_k]$ by
\begin{equation}
\label{def of g} 
g(t) := \left\{
\begin{aligned}
& { \tilde{g}(t) }\qquad & & {0 \leq t \leq \tau } \\
& {h\left(t- \tau \right)}\qquad & & {\tau < t \leq \tau+T_k }.
\end{aligned}
\right.
\end{equation} 
This already satisfies the required lower curvature estimate
$\cR_{g(t)} + \al_k \mathcal{I} \in \cC_{\I}$.

We claim that after possibly reducing $T_k>0$, without further dependencies, we have that for all $x\in \B_{g_0}(x_0,k)$, there holds the inclusion $\B_{g(t)}(x,\frac16)\subset\subset \B_{\ti g(\tau)}(x_0,k+\frac14)$, 
where the flow is defined, for all $t\in [0,\tau+T_k]$.

Because our curvature estimates currently deteriorate at time $\tau$, 
i.e. we do not yet have $c/t$ decay for all times,
we prove this claim separately for the cases $t \in [0,\tau]$ and $t \in (\tau , \tau+T_k]$.


For $t\in [0,\tau]$, the inclusion \eqref{new_SBL_inc} and the first inclusion of \eqref{ball_inclusion_EBL} tell us that 
(for a reduced $T_k>0$)
$$\textstyle
\B_{g(t)}(x,\frac16)\subset \B_{g_0}(x,\frac15)
\subset\subset \B_{g_0}(x_0,k+\frac15)\subset 
\B_{\ti g(\tau)}(x_0,k+\frac14),$$
so the claim holds up until time $\tau$.

Thus to prove the claim it remains to show that 
for all $x\in \B_{g_0}(x_0,k)$, there holds the inclusion 
$\B_{h(t)}(x,\frac16)\subset\subset \B_{h(0)}(x_0,k+\frac14)$ for all $t\in [0,T_k]$,
and by the second inclusion of \eqref{ball_inclusion_EBL}, it suffices to prove this 
for each $x\in \B_{h(0)}(x_0,k+\frac{1}{20})$.
But by the shrinking balls lemma \ref{nested balls}, after reducing $T_k>0$ 
we can deduce that 
$\B_{h(t)}(x,\frac16)\subset\subset\B_{h(0)}(x,\frac15) 
\subset\B_{h(0)}(x_0,k+\frac14)$
as required, thus proving the claim.


At this point we truncate the flow $g(t)$ to live only on the time interval $[0,T_k]$ (i.e. we chop off an interval of length $\tau$ from the end, not the beginning). 

The main final step is to apply Lemma \ref{loc_lemma_analogue}
to $g(t)$ with $M$ there equal to $\B_{\ti g(\tau)}(x_0,k+\frac14)$ here.
Using the claim we just proved, for every $x\in \B_{g_0}(x_0,k)$, 
after a possible further reduction of $T_k>0$, and with $C_k$ as fixed earlier, Lemma \ref{loc_lemma_analogue}, applied with $\ep=\frac16$, tells us that 
$|\Rm|_{g(t)}(x)\leq C_k/t$ and
$\inj_{g(t)} (x) \geq \sqrt{t/C_k}$
for all $t\in (0,T_k]$.
We finally have a sequence $T_k$ that does what the lemma asks of it, except for being decreasing. The monotonicity of $T_k$ and $\al_k$ can be arranged by
iteratively replacing $T_k$ by $\min\{T_k,T_{k-1}\}$,
and $\al_k$ by $\max\{\al_k,\al_{k-1}\}$, for $k=2,3,\ldots$.

\end{proof}

\noindent
The pyramid Ricci flows of Theorem \ref{Ricci Flow} are an immediate consequence of the Pyramid Extension Lemma \ref{PEL}:

\begin{proof}[Proof of Theorem \ref{Ricci Flow}]
By appealing to the Pyramid Extension Lemma \ref{PEL} we may 
retrieve increasing sequences $C_k \geq 1 , \al_k > 0$ and 
a decreasing sequence $T_k > 0$, 
all defined for $k \in \N$, 
and depending only on the given $n, \al_0$ and $v_0$.

To verify that these sequences meet the requirements of the theorem
we fix $l \in \N$ and use Lemma \ref{PEL} $l$ times to construct 
$g_l(t)$ as follows. 
First we use the first part of that lemma with $k=l$ to obtain 
an initial flow living on $\B_{g_0} (x_0 , l)$ for times
$t \in [0,T_l].$

Since $T_l \leq T_{l-1},$ we may appeal to the second part of Lemma
\ref{PEL} with $k=l-1$ to extend this flow to the longer time interval
$[0,T_{l-1}]$, albeit on the smaller ball $\B_{g_0} (x_0 , l-1).$

We repeat this process inductively for the remaining values of $k$ down until it is finally repeated for $k=1.$ 
The resulting smooth Ricci flow $g_l(t)$ is now defined, for each $k \in \left\{ 1 , \ldots , l \right\},$ 
on $\B_{g_0} ( x_0 , k )$ over the time interval $t \in \left[0, T_k \right],$ 
still satisfying that $g_l(0) = g_0$ where defined. Moreover,
our repeated applications of Lemma \ref{PEL} provide,
in particular, the estimates
\beq
\label{estimates}
\twopartcond
	{\cR_{g_l(t)} + \al_k \mathcal{I} \in \cC_{\I} } 
	{ \B_{g_0} \left( x_0 , k \right) \times \left[0,T_k\right]}
	{|\Rm|_{g_l(t)} \leq \frac{C_k}{t}} 
	{ \B_{g_0} \left( x_0 , k \right) \times \left(0,T_k\right]}
\eeq
for each $k \in \left\{1 , \ldots , l \right\}$, which completes the proof.
\end{proof}

\appendix

\section{Appendix - Supporting Results}\label{appA}

Here we collect some results 
from \cite{MT18} and \cite{Hoc19}, in slightly modified forms.
The following is a variant of Lemma A.4 from \cite{MT18}, which in turn originates in Lemma 2.3 in \cite{Topping1}. It differs by a parabolic scaling and a reduction in the conclusions.

\begin{lemma}[Volume control]
\label{Volume 2}
Suppose that $\left( M^n , g(t) \right)$ is a smooth Ricci flow over the time interval $t \in \left[0,T\right)$ and that 
for some $R \geq 1$, $\ep>0$ and $x_0 \in M$
we have  
$ \B_{g(0)} ( x_0 , R+\ep) \subset \subset M$.
Moreover assume that  
\begin{itemize}
	\item $\Ric_{g(t)} \geq -K$ on $\B_{g(0)} (x_0 , R+\ep ),$ for some $K > 0$ and all $t \in \left[0,T\right),$
	\item $| \Rm |_{g(t)} \leq \frac{c_0}{t}$ on $\B_{g(0)} (x_0 , R+\ep ),$ for some $c_0 > 0$ and all $t \in \left(0,T\right),$
	\item $\VolBB_{g(0)}  ( x_0 , 1 )  \geq v_0 > 0.$
\end{itemize}
Then there exist $\mu = \mu \left( v_0 , K , R , n, \ep \right) > 0$ and $\hat{T} = \hat{T} \left( v_0 , c_0 , K , n , R, \ep \right) > 0$ 
such that for all 
$t \in \left[0,T\right) \cap [0, \hat{T} )$ 
we have 
$\B_{g(t)} ( x_0 , R ) \subset \B_{g(0)} ( x_0 , R+\ep ),$ 
and 
$\VolBB_{g(t)} ( x_0 , 1 ) \geq \mu.$
\end{lemma}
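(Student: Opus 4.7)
The plan is to deduce the lemma from \cite[Lemma A.4]{MT18}, the unscaled parent statement, via a straightforward parabolic rescaling, as signalled in the remark preceding the lemma.

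First, I would introduce the rescaled flow $\ti g(s) := \ep^{-2} g(\ep^2 s)$, defined for $s \in [0, T/\ep^2)$. The hypotheses transform cleanly under this rescaling: the inclusion $\B_{g(0)}(x_0, R+\ep) \subset\subset M$ becomes $\B_{\ti g(0)}(x_0, R/\ep + 1) \subset\subset M$; the Ricci bound $\Ric_{g(t)} \geq -K$ becomes $\Ric_{\ti g(s)} \geq -\ep^2 K$; the curvature bound $|\Rm|_{g(t)} \leq c_0/t$ transforms to $|\Rm|_{\ti g(s)} \leq c_0/s$, the $c/t$ form being parabolically scale-invariant; and the initial volume hypothesis $\VolBB_{g(0)}(x_0, 1) \geq v_0$, combined with Bishop--Gromov applied to $g(0)$ under $\Ric \geq -K$, yields $\VolBB_{\ti g(0)}(x_0, 1) = \ep^{-n}\VolBB_{g(0)}(x_0, \ep) \geq \ti v_0$ for some $\ti v_0 > 0$ depending only on $v_0, K, n$ and $\ep$.

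Next, I would apply \cite[Lemma A.4]{MT18} to $\ti g$ at spatial radius $R/\ep$, retrieving both the ball inclusion $\B_{\ti g(s)}(x_0, R/\ep) \subset \B_{\ti g(0)}(x_0, R/\ep + 1)$ and the unit-ball volume lower bound $\VolBB_{\ti g(s)}(x_0, 1) \geq \ti \mu$, valid for $s$ below a threshold depending on $\ti v_0, c_0, \ep^2 K, n$ and $R/\ep$. Unscaling, the ball inclusion pulls back to $\B_{g(t)}(x_0, R) \subset \B_{g(0)}(x_0, R+\ep)$, and the threshold on $s$ becomes a threshold on $t$ depending on $v_0, c_0, K, n, R, \ep$, as required. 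The volume bound unscales to $\VolBB_{g(t)}(x_0, \ep) = \ep^n \VolBB_{\ti g(s)}(x_0, 1) \geq \ep^n \ti \mu$; assuming $\ep \leq 1$ without loss of generality (replacing $\ep$ by $\min\{\ep,1\}$ if necessary, since shrinking $\ep$ only strengthens the hypothesis), the trivial monotonicity $\VolBB_{g(t)}(x_0, 1) \geq \VolBB_{g(t)}(x_0, \ep)$ yields the required lower bound $\mu := \ep^n \ti \mu$.

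The principal point needing care in this plan is bookkeeping of dependencies: one must verify that $\ti \mu$ arising from the parent lemma is $c_0$-independent (matching the claimed $c_0$-independence of $\mu$ here), and that every rescaled parameter fed into \cite[Lemma A.4]{MT18} lies within the scope of that lemma's hypotheses. The substantive analytic content -- the shrinking balls lemma for the inclusion, and the Simon--Topping-style volume comparison argument for $\mu$ -- is inherited from the parent lemma rather than re-proved, which is presumably why the authors summarise the difference as ``a parabolic scaling and a reduction in the conclusions''.
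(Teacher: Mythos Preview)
Your proposal is correct and matches the paper's approach exactly: the paper does not give a standalone proof of this lemma but simply states that it ``differs by a parabolic scaling and a reduction in the conclusions'' from \cite[Lemma~A.4]{MT18}, and you have filled in precisely that rescaling argument. The bookkeeping you flag (the $c_0$-independence of $\mu$, the WLOG reduction to $\ep\leq 1$, and the Bishop--Gromov step to pass the volume lower bound to the rescaled unit ball) is handled correctly.
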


\noindent
The following results from \cite{Topping1} relate geodesic balls
taken with respect to the metric at different times of a smooth Ricci flow satisfying various local curvature bounds.

\begin{lemma}[The shrinking balls lemma; Corollary 3.3 in \cite{Topping1}]
\label{nested balls}
There exists a constant $\beta = \beta (n) \geq 1$ such that the following is true.
Suppose $M$ is a smooth $n$-manifold and $g(t)$ is a smooth Ricci flow on $M$ defined for all times $0 \leq t \leq T.$  
Suppose $x_0 \in M$ and $r > 0$ are such that
$\B_{g(0)} (x_0 , r) \subset \subset M.$
Further assume that for some $c_0 > 0$ we have
$|\Rm|_{g(t)} \leq \frac{c_0}{t},$
or more generally
$\Ric_{g(t)} \leq \frac{c_0(n-1)}{t},$ 
throughout 
$\B_{g(0)}(x_0,r) \cap \B_{g(t)} ( x_0 , r - \beta \sqrt{c_0 t} )$ for each $t \in (0,T].$ 
Then whenever $0 \leq s \leq t \leq T,$ we have 
\beq
\label{general time} 
	\B_{g(t)}  \left( x_0 , r - \beta \sqrt{c_0 t} \right) \subset \B_{g(s)} \left(x_0 , r- \beta \sqrt{c_0 s}\right). 
\eeq
In particular, for all $0 \leq t \leq T$ 
\beq
\label{initial time} 
	\B_{g(t)}  \left( x_0 , r - \beta \sqrt{c_0 t} \right) \subset \B_{g(0)} (x_0 , r). 
\eeq 

\end{lemma}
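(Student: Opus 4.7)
The plan is to show directly that, for any fixed $y \in M$, the function $\tau \mapsto d_{g(\tau)}(x_0,y) + \beta\sqrt{c_0\tau}$ is nondecreasing in $\tau$ on any time interval throughout which $y$ sits in the good region $\B_{g(\tau)}(x_0, r - \beta\sqrt{c_0\tau})$, which is algebraically equivalent to the inclusion \eqref{general time}. The essential analytic ingredient is the Hamilton--Perelman distance distortion lemma (in the form of Perelman's Lemma 8.3 in \cite{P02}): at any time $\tau > 0$ for which $\Ric_{g(\tau)} \leq (n-1)c_0/\tau$ on balls of radius $r_\tau := \sqrt{\tau/c_0}$ around $x_0$ and $y$, the forward one-sided derivative of $\tau \mapsto d_{g(\tau)}(x_0,y)$ is bounded below by $-\gamma(n)\sqrt{c_0/\tau}$ for an explicit dimensional constant $\gamma(n)$. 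I will set $\beta := 2\gamma(n) + 1$.

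Fix $t$ and $y \in \B_{g(t)}(x_0, r - \beta\sqrt{c_0 t})$, and define $s^* \in [0,t]$ to be the infimum of those $\sigma$ for which $y \in \B_{g(\tau)}(x_0, r - \beta\sqrt{c_0\tau})$ holds for every $\tau \in [\sigma, t]$. For each $\tau \in (s^*, t]$ the definition of $s^*$ combined with the assumed curvature bound (assumed precisely on $\B_{g(0)}(x_0,r) \cap \B_{g(\tau)}(x_0, r - \beta\sqrt{c_0\tau})$) places us in a position to apply the distance distortion lemma at time $\tau$; integrating the resulting pointwise estimate backwards from $t$ yields
\[
d_{g(s)}(x_0,y) - d_{g(t)}(x_0,y) \leq \gamma(n)\int_s^t \sqrt{c_0/\tau}\, d\tau = 2\gamma(n)\bigl(\sqrt{c_0 t} - \sqrt{c_0 s}\bigr)
\]
for each $s \in (s^*, t]$, which rearranges, using $\beta \geq 2\gamma(n) + 1$, to the strict inclusion $y \in \B_{g(s)}(x_0, r - \beta\sqrt{c_0 s})$. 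By continuity of $\tau \mapsto d_{g(\tau)}(x_0,y)$ this strict inclusion persists at $s = s^*$, contradicting the minimality of $s^*$ unless $s^* = 0$, which is exactly \eqref{general time}. The special case \eqref{initial time} is then just $s = 0$.

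The main obstacle is a technical one: verifying that the two auxiliary balls of radius $r_\tau = \sqrt{\tau/c_0}$ around $x_0$ and $y$ used by the distance distortion lemma genuinely sit inside the region on which the curvature hypothesis is assumed. For the $x_0$-centered ball this is immediate once we restrict attention to $\tau$ with $r_\tau$ smaller than the gap $r - (r - \beta\sqrt{c_0\tau})$, which holds automatically under the implicit normalisation $\beta\sqrt{c_0 t} < r$ (outside this range \eqref{general time} is trivial). For the $y$-centered ball, the bootstrap outlined above produces, for $\tau$ strictly less than $t$, a gap of size $(\beta - 2\gamma(n))(\sqrt{c_0 t} - \sqrt{c_0\tau})$ between $d_{g(\tau)}(x_0,y)$ and $r - \beta\sqrt{c_0\tau}$; when this exceeds $r_\tau$ the auxiliary $y$-ball lies in the good region, and when it does not the degenerate case $d_{g(\tau)}(x_0,y) \lesssim r_\tau$ of the distortion lemma applies directly (the distance can only grow by comparable amounts either way, so the required inequality is easily absorbed in the margin provided by the extra $+1$ in the choice of $\beta$).
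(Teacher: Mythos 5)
The paper itself does not prove this lemma: it is imported verbatim as Corollary~3.3 of \cite{Topping1}, and your overall strategy --- Perelman's distance--distortion estimate with the balancing choice $r_0\sim\sqrt{\tau/c_0}$, integration of $\sqrt{c_0/\tau}$ to produce the $2\sqrt{c_0 t}$ behaviour, and a backwards-in-time continuity argument --- is indeed the strategy of the cited proof. However, the step you dismiss as ``technical'' is where the entire content of the lemma lies, and your treatment of it has a genuine gap.

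The curvature hypothesis is assumed only on the intersection $\B_{g(0)}(x_0,r)\cap\B_{g(\tau)}(x_0,r-\beta\sqrt{c_0\tau})$, so to invoke the distortion lemma at time $\tau$ you must place the auxiliary balls $\B_{g(\tau)}(x_0,r_\tau)$ and $\B_{g(\tau)}(y,r_\tau)$ inside \emph{both} factors; you only ever discuss the second. Containment in $\B_{g(0)}(x_0,r)$ asks that points close to $x_0$ at time $\tau$ were within distance $r$ of $x_0$ at time $0$ --- a statement of exactly the type of \eqref{initial time}, i.e.\ part of what is being proved. Your bootstrap via $s^*$ tracks only the single point $y$ and therefore cannot supply this; the argument as written is circular. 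Breaking the circularity is precisely why the original proof must propagate the containment of the whole shrinking ball (equivalently, run the monotonicity of $\tau\mapsto d_{g(\tau)}(x_0,\cdot)+\beta\sqrt{c_0\tau}$ for all points simultaneously inside one open--closed argument in time), rather than arguing point by point. Separately, your verification for the $x_0$-centred ball is incorrect: $r_\tau=\sqrt{\tau/c_0}\leq\beta\sqrt{c_0\tau}$ is equivalent to $\beta c_0\geq 1$, which has nothing to do with the normalisation $\beta\sqrt{c_0 t}<r$ and fails for small $c_0$ (note $c_0$ is invariant under parabolic rescaling, so it cannot be normalised away); and even when it holds, comparing $r_\tau$ with the annular gap $\beta\sqrt{c_0\tau}$ does not by itself give $\B_{g(\tau)}(x_0,r_\tau)\subset\B_{g(\tau)}(x_0,r-\beta\sqrt{c_0\tau})$. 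Both points need to be repaired --- essentially by restructuring the bootstrap --- before this constitutes a proof.
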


\begin{lemma}[The expanding balls lemma; see Lemma 3.1 in \cite{Topping1} and Lemma 2.1 in \cite{Topping2}]
\label{expanding balls}
Let $K , T > 0$ both be given.
Suppose $g(t)$ is a smooth Ricci flow on a smooth $n$-manifold $M$, defined for all times $-T \leq t \leq 0$. 
Let $x_0 \in M$ with $R > 0$ such that
$\B_{g(0)} (x_0 , R) \subset \subset M$ and 
for each $t \in [-T,0]$ suppose that we have 
$\Ric_{g(t)} \geq -K$ throughout 
$\B_{g(0)}(x_0,R) \cap \B_{g(t)} \left( x_0 , Re^{Kt} \right) \subset \B_{g(t)} \left( x_0 , R \right).$ 
Then for all $t \in [-T,0]$ 
\beq
	\label{expanding contain}  
 		\B_{g(t)} \left(x_0 , Re^{Kt}\right)
 		\subset
 		\B_{g(0)} \left(x_0 , R\right). 
\eeq
\end{lemma}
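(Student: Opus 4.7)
The plan is to prove Lemma \ref{expanding balls} by a continuity argument built on the basic distance-distortion estimate for Ricci flow under a lower Ricci bound. The key analytic input is that the Ricci flow equation $\partial_t g = -2\Ric_{g(t)}$ combined with $\Ric_{g(t)} \geq -K$ on the relevant region gives $\partial_t g \leq 2Kg$ pointwise there, and hence for any smooth curve $\gamma$ whose image stays in the region where $\Ric \geq -K$ throughout a time interval $[s_0, s_1]$, the length satisfies $\tfrac{d}{ds} L_{g(s)}(\gamma) \leq K L_{g(s)}(\gamma)$. Gronwall then yields $L_{g(s_1)}(\gamma) \leq e^{K(s_1 - s_0)} L_{g(s_0)}(\gamma)$ whenever $s_0 \leq s_1$.

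Fixing $t_0 \in [-T, 0]$ and taking any $y$ with $d_{g(t_0)}(y, x_0) < Re^{Kt_0}$, I would pick a $g(t_0)$-minimising geodesic $\gamma$ from $x_0$ to $y$; its length is strictly less than $Re^{Kt_0}$ and its image lies in $\B_{g(t_0)}(x_0, Re^{Kt_0})$. If one can arrange that this image remains inside $\B_{g(0)}(x_0, R) \cap \B_{g(s)}(x_0, Re^{Ks})$ for every $s \in [t_0, 0]$, so that the assumed lower Ricci bound is in force along $\gamma$ throughout, then applying the length estimate between $s_0 = t_0$ and $s_1 = 0$ gives $d_{g(0)}(y, x_0) \leq L_{g(0)}(\gamma) \leq e^{-Kt_0} L_{g(t_0)}(\gamma) < e^{-Kt_0} \cdot Re^{Kt_0} = R$, which is exactly the inclusion sought.

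To ensure the required geodesic confinement I would run a standard continuity argument. Let $\tau^*$ be the infimum of those $\tau \in [-T, 0]$ for which the inclusion $\B_{g(s)}(x_0, Re^{Ks}) \subset \B_{g(0)}(x_0, R)$ holds throughout $s \in [\tau, 0]$. Trivially $\tau^* \leq 0$, and smoothness of the flow together with closedness of the inclusion condition force the inclusion to persist at $s = \tau^*$. Supposing for contradiction that $\tau^* > -T$, one argues that for $t_0$ just below $\tau^*$ and any $y \in \B_{g(t_0)}(x_0, Re^{Kt_0})$, the relevant $g(t_0)$-minimising geodesic from $x_0$ to $y$ remains in $\B_{g(s)}(x_0, Re^{Ks}) \subset \B_{g(0)}(x_0, R)$ for each $s \in [t_0, 0]$, so the previous paragraph applies to give the inclusion at $t_0$, contradicting the definition of $\tau^*$. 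The delicate point, and the main obstacle, is precisely this geodesic confinement on $[t_0, 0]$: it is where one must exploit the strict inequality $d_{g(t_0)}(y, x_0) < Re^{Kt_0}$ together with smoothness of $g$, and it is the only place in the proof where genuine care is needed.
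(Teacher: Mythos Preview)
The paper does not actually prove Lemma~\ref{expanding balls}; it records the statement in the appendix as a supporting result and refers to \cite[Lemma~3.1]{Topping1} and \cite[Lemma~2.1]{Topping2} for the proof. Your outline follows exactly the standard argument used in those references: the length-distortion estimate $\tfrac{d}{ds}L_{g(s)}(\gamma)\leq K L_{g(s)}(\gamma)$ under a local lower Ricci bound, combined with a continuity argument on the set of times for which the desired inclusion holds.

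One small remark on the step you flag as delicate. As written, you take $t_0$ strictly below $\tau^*$ and assert that the $g(t_0)$-minimising geodesic $\gamma$ remains in $\B_{g(s)}(x_0,Re^{Ks})\subset\B_{g(0)}(x_0,R)$ for all $s\in[t_0,0]$; but on the subinterval $[t_0,\tau^*)$ you have not yet established the second inclusion, which is precisely what you are trying to prove. The cleaner way to organise this (and how it is done in \cite{Topping1,Topping2}) is to run the length-growth argument first on the interval $[\tau^*,0]$, where the inclusion is already known and the hypothesis region simplifies to $\B_{g(s)}(x_0,Re^{Ks})$; an inner continuity argument in $s$ shows that every initial sub-arc of the $g(\tau^*)$-geodesic stays in $\B_{g(s)}(x_0,Re^{Ks})$ throughout $[\tau^*,0]$, yielding the strict bound $d_{g(0)}(x_0,y)\leq e^{-K\tau^*}d_{g(\tau^*)}(x_0,y)<R$. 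Only then does one use smoothness of $g(t)$ on the compact set $\overline{\B_{g(0)}(x_0,R)}\subset\subset M$ to show that, for $t_0$ slightly below $\tau^*$, every $y\in\B_{g(t_0)}(x_0,Re^{Kt_0})$ already lies in $\B_{g(\tau^*)}(x_0,Re^{K\tau^*})$, after which the previous step applies. This is precisely the ``genuine care'' you anticipate, and once filled in your proof is complete and matches the original.
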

\vskip 4pt
\noindent
Finally we record the following result from \cite{Hoc19} which
establishes the propagation of  lower curvature bounds 
forwards in time under Ricci flows that may be incomplete.
Details of all curvature cones within the following result
may be found in \cite{Wilking}. 


\begin{proposition}[Proposition II.2.6 in \cite{Hoc19}]
\label{Hoc_II.2.6}
Let $n \in \N$ and $c_0 > 0$ both be given. Then there is a constant 
$A = A ( n , c_0 ) > 0$ for which the following is true.
Assume $\cC$ is one of the invariant curvature cones 
$\cC_{CO}, \cC_{\I}$ or $\cC_{CSC}$
that are described in Remark \ref{other_cones}. 
Let $\left( M , g(t) \right)$ be a smooth $n$-dimensional Ricci flow, 
defined for $0 \leq t \leq T,$ satisfying 
$\Sc_{g(t)} \geq -1$ throughout
$M \times [0,T]$, and both $|\Rm|_{g(t)} \leq \frac{c_0}{t}$ and $\inj_{g(t)} \geq \sqrt{\frac{t}{c_0}}$ throughout $M \times (0,T].$
Then, if 
$\cR_{g(0)} + \mathcal{I} \in \mathcal{C}$ throughout $M$, 
we may conclude that 
$\cR_{g(t)} + A \rho_0^{-2} \mathcal{I} \in \mathcal{C}$
throughout $M \times [0,T]$,
where $\rho_0 : M \to [0,1]$ is defined by
$\rho_0 (x) := \sup \left\{ r \in (0,1] : \B_{g(0)} (x,r) \subset \subset M \right\}.$
\end{proposition}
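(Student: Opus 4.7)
The plan is to run a localized Hamilton-style tensor maximum principle for the curvature operator. Under Ricci flow, $\cR$ satisfies (in an Uhlenbeck-trivialised frame)
\[
\partial_t \cR = \Delta \cR + 2(\cR^2 + \cR^\#),
\]
and the key algebraic input is that each of $\cC_{CO}, \cC_{\I}, \cC_{CSC}$ is invariant under the reaction ODE $\dot\cR = \cR^2 + \cR^\#$ (Wilking). In the complete bounded-curvature setting, Hamilton's classical maximum principle would directly propagate $\cR + \mathcal{I} \in \cC$; the content of the proposition is to absorb the defect from incompleteness into the factor $A\rho_0^{-2}$, using the a priori bounds $|\Rm|_{g(t)} \leq c_0/t$, $\inj_{g(t)} \geq \sqrt{t/c_0}$ and $\Sc_{g(t)} \geq -1$ to make the argument quantitative and uniform in $T$.

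First I would reduce by parabolic rescaling. For fixed $x_0 \in M$ with $r := \rho_0(x_0)\in(0,1]$, set $\tilde g(t) := r^{-2} g(r^2 t)$. All hypotheses are preserved (in particular $\Sc_{\tilde g(t)} \geq -r^2 \geq -1$ and the $|\Rm|$ and $\inj$ bounds are scale-invariant), while the initial cone condition rescales to $\cR_{\tilde g(0)} + r^2 \mathcal{I} \in \cC$, which since $r\leq 1$ and $\mathcal{I}\in \cC$ implies $\cR_{\tilde g(0)} + \mathcal{I}\in \cC$. Moreover $\tilde\rho_0(x_0) \geq 1$. It therefore suffices to prove the pointwise conclusion at $x_0$ under the added normalisation $\rho_0(x_0) \geq 1$: that $\cR_{g(t)}(x_0) + A\mathcal{I} \in \cC$ for all $t \in [0,T]$, with $A = A(n,c_0)$.

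Next I would split by time. Pick a threshold $t_1 = t_1(n,c_0) \in (0,1]$ to be chosen. For $t \in [t_1,T]$ the bound $|\Rm|_{g(t)} \leq c_0/t \leq c_0/t_1$ already implies $\cR_{g(t)} + (c_0/t_1) \mathcal{I} \in \cC$ trivially. For $t \in [0,t_1]$, pick a cutoff $\eta \in C^\infty_c(\B_{g(0)}(x_0,1))$ with $\eta \equiv 1$ on $\B_{g(0)}(x_0,\tfrac12)$ and $|\nabla\eta|_{g(0)}, |\Hess\eta|_{g(0)} \leq C(n)$. By the shrinking balls lemma \ref{nested balls} together with $|\Rm|\leq c_0/t$, after reducing $t_1$ the support of $\eta$ remains strictly inside $\B_{g(0)}(x_0,1)$ under $g(t)$ and $\nabla \eta$, $\Hess \eta$ in $g(t)$ stay comparable to their $t=0$ values. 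Now apply the tensor maximum principle to the shifted inequality $\cR_{g(t)} + K(t)\mathcal{I} \in \cC$ with $K(0) = 1$, testing cone membership against the finite family of linear functionals defining $\cC$ so as to reduce the tensor problem to scalar comparison. At a first interior touching point produced by the cutoff, the reaction term forces a Riccati-type differential inequality on $K$.

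The main obstacle is closing this Riccati estimate uniformly in $T$, since the cutoff errors decay only like $|\Rm| \sim c_0/t$ and the reaction is a priori quadratic in $\cR$. The resolution is to exploit the structure of $(\cR + K\mathcal{I})^2 + (\cR + K\mathcal{I})^\#$ at the touching point: cone invariance means the purely $\cR$-quadratic piece and purely $K^2$-piece both feed back into the cone, while the cross-terms between $\cR$ and $K\mathcal{I}$ reduce via the standard identities for the $\#$-product against $\mathcal{I}$ to expressions that are at most linear in $\cR$ and involve only $\Sc$. The hypothesis $\Sc \geq -1$ then turns the effective right-hand side of the Riccati inequality into one bounded by $C(n) K + C(n,c_0)$ on $[0,t_1]$ (the cutoff error being absorbed against the injectivity-radius bound through a Shi-type estimate). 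Choosing $t_1$ sufficiently small depending only on $n$ and $c_0$ keeps $K$ bounded on $[0,t_1]$ by some $A(n,c_0)$, and taking $A := \max\{A(n,c_0), c_0/t_1\}$ gives the required constant.
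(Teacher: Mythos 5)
First, a point of orientation: the paper does not prove this proposition at all --- it is quoted verbatim as Proposition II.2.6 of Hochard's thesis \cite{Hoc19} and used as a black box, so there is no in-paper proof to compare against. Your outer architecture (parabolic rescaling to normalise $\rho_0(x_0)=1$, discarding times $t\geq t_1$ where $|\Rm|\leq c_0/t_1$ makes the conclusion trivial, and then a localized Hamilton-type maximum principle exploiting ODE-invariance of the cone) is reasonable and the rescaling step is checked correctly. But the proposal has a genuine gap precisely at the point where the result is hard.

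The gap is the localization. In the unlocalized setting the Riccati comparison does close: at a touching point $\cR+K\mathcal{I}\in\partial\cC$ one has $Q(\cR+K\mathcal{I})=Q(\cR)+2K(\cR+\cR\#\mathcal{I})+K^2(\mathcal{I}+\mathcal{I}^\#)$, the cross term equals $2K\,\Ric\wedge\operatorname{id}$ (not an expression in $\Sc$ alone, as you assert --- a minor but real imprecision), and cone membership at the touching point supplies $\Ric\geq -(n-1)K$, so the reaction contributes only $O(K^2+K)$. The difficulty is entirely in the cutoff errors, and your sentence ``the cutoff error being absorbed against the injectivity-radius bound through a Shi-type estimate'' asserts rather than proves the crux. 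A cutoff $\eta$ produces terms of the schematic form $\nabla\eta*\nabla\cR+\Delta\eta*\cR$ in the evolution of $\eta\cdot(\text{cone defect})$, and under the hypotheses these are only bounded by $c_0 t^{-3/2}$ and $c_0 t^{-1}$ respectively; neither is integrable on $(0,t_1]$, so Gronwall does not close. Worse, if the first touching point occurs where $\eta<1$, the only available bound on the defect there is $O(c_0/t)$, again non-integrable. This is exactly why the actual proofs (Bamler--Cabezas-Rivas--Wilking, and Hochard's localization) do not run a single cutoff-plus-ODE comparison but instead use a spatially varying barrier blowing up like $\rho_0^{-2}$ near the boundary together with a point-picking/iteration-over-scales scheme --- which is also why the weight $\rho_0^{-2}$ appears in the conclusion at all, rather than being a normalisation artifact as your reduction treats it. The hypothesis $\Sc\geq -1$ likewise enters their argument structurally (as the already-propagated zeroth-order bound feeding a family of pinching cones), not as a device for taming the reaction term. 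As written, your argument would need either this barrier/point-picking machinery or some substitute for it before the Riccati inequality you want on $[0,t_1]$ is justified.
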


\footnotesize

\noindent
AM:
\emph{
a.mcleod@ucl.ac.uk} 

\noindent
\url{https://iris.ucl.ac.uk/iris/browse/profile?upi=AMCLE50}

\noindent
{\sc Department of Mathematics, University College London, London,
WC1H 0AY, UK} 

\vskip 5pt

\noindent
PT:\\
\url{http://homepages.warwick.ac.uk/~maseq/}

\noindent
{\sc Mathematics Institute, University of Warwick, Coventry,
CV4 7AL, UK}

\end{document}